\mathchardef\pFcomma=\mathcode`, 
\newcommand*\pFq[5]{%
  \begingroup
  \begingroup\lccode`~=`,
    \lowercase{\endgroup\def~}{\pFcomma\mkern\pFqskip}%
  \mathcode`,=\string"8000
  {}_{#1}F_{#2}\biggl(\genfrac..{0pt}{}{#3}{#4};#5\biggr)%
  \endgroup
}
\newtheorem{Theorem}{Theorem}[section]
\newtheorem{Lemma}[Theorem]{Lemma}
\newtheorem{Corollary}[Theorem]{Corollary}
\theoremstyle{remark}
\newtheorem{Example}[Theorem]{\bf Example}
\renewcommand{\d}{{\mathrm d}}
\renewcommand{\Im}{\operatorname{Im}}
\begin{document}

\title{Hypergeometric modular equations}

\author{Shaun Cooper}
\address{Institute of Natural and Mathematical Sciences, Massey University\,---\,Albany, Private Bag 102904, North Shore Mail Centre, Auckland 0745, New Zealand}
\email{s.cooper@massey.ac.nz}

\author{Wadim Zudilin}
\address{Max-Planck-Institut f\"ur Mathematik, Vivatsgasse 7, 53111 Bonn, Germany}
\email{wzudilin@mpim-bonn.mpg.de}
\address{School of Mathematical and Physical Sciences, The University of Newcastle, Callaghan NSW 2308, Australia}

\dedicatory{In the memory of Jon Borwein, the late Dr Pi}

\begin{abstract}
We record $\binom{42}2+\binom{23}2+\binom{13}2=1192$
functional identities that, apart from being amazingly amusing by themselves, find applications in
derivation of Ramanujan-type formulas for $1/\pi$ and in computation of mathematical constants.
\end{abstract}

\maketitle
\numberwithin{equation}{section}

It is all about $\pi$.

\section{Introduction and statement of results}
\label{sec1}

One of the best known results of Ramanujan is his collection of series for~$1/\pi$ in~\cite{ramanujan_pi}.
One of memorable achievements of Jonathan and Peter Borwein was proving the entries in Ramanujan's collection some 70 years later \cite{agm}.
As a representative example, we quote the series \cite[Eq.~(29)]{ramanujan_pi}
\begin{equation}
\label{pi01}
\sum_{n=0}^\infty \frac{(\frac12)_n^3}{n!^3} \biggl(n+ \frac{5}{42}\biggr)
\biggl(\frac{1}{2}\biggr)^{6n}
= \frac{1}{\pi} \times \frac{8}{21}.
\end{equation}
Here and below we use
\begin{equation*}
(s)_n=\frac{\Gamma(s+n)}{\Gamma(s)}=\begin{cases}
s(s+1)\dotsb(s+n-1) & \text{if $n=1,2,\ldots$}\,, \\
1 & \text{if $n=0$},
\end{cases}
\end{equation*}
for the Pochhammer symbol (or shifted factorial), as well as the related notation
\begin{equation}
\pFq{m+1}{m}{a_0,a_1,\dots,a_m}{b_1,\dots,b_m}{x}
=\sum_{n=0}^\infty\frac{(a_0)_n(a_1)_n\dotsb(a_m)_n}{n!\,(b_1)_n\dotsb(b_m)_n}\,x^m
\label{pFq}
\end{equation}
for the generalized hypergeometric function.

Ramanujan's series for $1/\pi$ may all be expressed in the form
$$
\biggl(\alpha + x\,\frac{\d}{\d x}\biggr)
\pFq{3}{2}{\frac12,s,1-s}{1,1}{x}\bigg|_{x=x_0}=\frac{\beta}{\pi}
$$
where $\alpha$, $\beta$ and $x_0$ are algebraic numbers and $s \in \{\frac16,\frac14,\frac13,\frac12\}$.
In the example~\eqref{pi01}, we have
$$
\alpha=\frac{5}{42}, \quad \beta=\frac{8}{21}, \quad x=\frac{1}{64} \quad\text{and}\quad s=\frac12.
$$
Behind such identities there is a beautiful machinery of modular functions, something that was hinted by Ramanujan
a century ago and led to the earlier and later proofs \cite{agm,domb,Chud}.

A more recent investigation on interrelationships among Ramanujan's series in the works of several authors
\cite{aycock,mathematika,translation,rogers,Zu,Zu5} suggests considering transformation formulas of the type
\begin{equation}
\label{generaltransformation}
\pFq{3}{2}{\frac12,s,1-s}{1,1}{x(p)} = r(p)\, \pFq{3}{2}{\frac12,t,1-t}{1,1}{y(p)},
\end{equation}
where $x(p)$, $y(p)$ and $r(p)$ are algebraic functions of $p$, and $s,t \in \{\frac16,\frac14,\frac13,\frac12\}$.
For example, Aycock \cite{aycock} was mainly interested in using instances of \eqref{generaltransformation} to derive formulas such as \eqref{pi01}.
The discussion of transformation formulas of the type \eqref{generaltransformation} in \cite[pp. 15--16]{aycock}
contains just eight formulas that were obtained by searching the literature.
Just to mention a few, we list the examples\footnote{Some misprints in \cite[p. 15]{aycock} have been corrected.}
\begin{align}
\label{eg0}
\pFq{3}{2}{\frac12,\frac12,\frac12}{1,1}{x}
&= \frac{2}{\sqrt{4-x}}\,\pFq{3}{2}{\frac16,\frac12,\frac56}{1,1}{\frac{27x^2}{(4-x)^3}},
\\
\label{eg1}
\pFq{3}{2}{\frac14,\frac12,\frac34}{1,1}{\frac{256x}{(1+27x)^4}}
&= \frac{1+27x}{1+3x}\, \pFq{3}{2}{\frac14,\frac12,\frac34}{1,1}{\frac{256x^3}{(1+3x)^4}}
\\ \intertext{and}
\label{eg2}
\pFq{3}{2}{\frac13,\frac12,\frac23}{1,1}{4x(1-x)}
&= \frac{3}{\sqrt{9-8x}}\,\pFq{3}{2}{\frac16,\frac12,\frac56}{1,1}{\frac{64x^3(1-x)}{(9-8x)^3}}.
\end{align}

We should point out that algebraic transformations of hypergeometric functions, in particular, of modular origin,
are related to the monodromy of the underlying linear differential equations. This is a reasonably popular topic,
with Goursat's original 140-page contribution \cite{goursat} as the starting point.
See \cite{almkvistetal,maier,vidunas} for some recent developments.

The goal of this work is to systematically organize and classify identities of the type \eqref{generaltransformation}.
In particular, we will show that the functions that occur in \eqref{eg0}--\eqref{eg2}
are part of a single result that asserts that forty-two functions are equal.
Our results also encapsulate identities such as
\begin{equation}
\label{pb}
f_{6b}(x)=\frac{1}{1-x}f_{6c}\biggl(\frac{x}{1-x}\biggr)
\quad\text{and}\quad
f_{6c}(x)=\frac{1}{1+x}f_{6b}\biggl(\frac{x}{1+x}\biggr)
\end{equation}
where
$$
f_{6b}(x)=\sum_{n=0}^\infty \biggl\{\sum_{j+k+\ell=n} \biggl(\frac{n!}{j!k!\ell!}\biggr)^2\biggr\}x^n
$$
and
$$
f_{6c}(x)=\sum_{n=0}^\infty \biggl\{ \sum_{j+k=n}\biggl(\frac{n!}{j!k!}\biggr)^3\biggr\} x^n;
$$
that is,
$f_{6b}(x)$ is the generating function for sums of squares of trinomial coefficients, while
$f_{6c}(x)$ is the generating function for sums of cubes of binomial coefficients. Results for
Ap\'ery, Domb and Almkvist--Zudilin numbers, as well as sums of the fourth powers of
binomial coefficients, will also appear as special cases of our results.

Our results also include transformation formulas such as
\begin{multline*}
\frac{1}{(1-4x)^{5/2}}\,\pFq{3}{2}{\frac12,\frac12,\frac12}{1,1}{-64x\biggl(\frac{1+x}{1-4x}\biggr)^5}
\\
=\frac{1}{(1-4x)^{1/2}}\,\pFq{3}{2}{\frac12,\frac12,\frac12}{1,1}{-64x^5\biggl(\frac{1+x}{1-4x}\biggr)}
\end{multline*}
are part of a thirteen-function identity.

This work is organized as follows. In the next section, some sequences and their generating functions are
defined.

The main results are stated in Sections \ref{sec3}, \ref{sec4} and \ref{sec5}.
Each section consists of a single theorem that asserts that a large number of functions are equal.

Short proofs, using differential equations, are given in Section~\ref{sec6}.
Alternative proofs using modular forms, that help put the results into context, are given in Section~\ref{sec7}.

Several special cases are elucidated in Section~\ref{sec8}.
An application to Ramanujan's series for $1/\pi$, using some of Aycock's ideas, is given in Section~\ref{sec9}.

\section{Definitions and background information}
\label{sec2}

The series in \eqref{pFq} that defines the hypergeometric functions ${}_2F_1$ and ${}_3F_2$ converges for $|x|<1$.
Clausen's identity \cite[p.~116]{aar} is
$$
\biggl\{\pFq{2}{1}{a,b}{a+b+\frac12}{x}\biggr\}^2
=\pFq{3}{2}{2a,2b,a+b}{2a+2b,a+b+\frac12}{x}.
$$
It may be combined with the quadratic transformation formula \cite[p.~125]{aar}
$$
\pFq{2}{1}{2a,2b}{a+b+\frac12}{x} = \pFq{2}{1}{a,b}{a+b+\frac12}{4x(1-x)}
$$
to give
$$
\biggl\{\pFq{2}{1}{2a,2b}{a+b+\frac12}{x}\biggr\}^2
= \pFq{3}{2}{2a,2b,a+b}{2a+2b,a+b+\frac12}{4x(1-x)}.
$$
We will be interested in the special case $2a=s$, $2b=1-s$, that is
\begin{equation}
\label{clauclau}
\biggl\{\pFq{2}{1}{s,1-s}{1}{x}\biggr\}^2
=\pFq{3}{2}{\frac12,s,1-s}{1,1}{4x(1-x)}
\end{equation}
where $s$ assumes one the values $1/6$, $1/4$, $1/3$ or $1/2$.
For cosmetic reasons we introduce the following nicknames for these special instances of hypergeometric functions:
$$
f_\ell(x)=\pFq{2}{1}{s,1-s}{1}{C_sx}
$$
and
$$
F_\ell(x)=\pFq{3}{2}{\frac12,s,1-s}{1,1}{4C_sx},
$$
where $\ell=\ell_s=1,2,3,4$ and $C_s=432,64,27,16$ for $s=1/6,1/4,1/3,1/2$, respectively.
The arithmetic normalization constants $C_s$ are introduced in such
a way that the series $f_\ell(x)$ and $F_\ell(x)$ all belong to the ring $\mathbb Z[[x]]$; for example,
\begin{equation}
\begin{gathered}
F_1(x)=\sum_{n=0}^\infty\binom{6n}{3n}\binom{3n}{2n}\binom{2n}{n}x^n,
\quad
F_2(x)=\sum_{n=0}^\infty\binom{4n}{2n}{\binom{2n}{n}}^2x^n,
\\
F_3(x)=\sum_{n=0}^\infty\binom{3n}{n}{\binom{2n}{n}}^2x^n
\quad\text{and}\quad
F_4(x)=\sum_{n=0}^\infty{\binom{2n}{n}}^3x^n.
\end{gathered}
\label{bhf}
\end{equation}
The convergence domains of the series for $f_\ell(x)$ and $F_\ell(x)$ are then $|x|<1/C_s$ and $|x|\le1/(4C_s)$, respectively.

\medskip
Our further examples of the series from $\mathbb Z[[x]]$ are generating functions of so-called Ap\'ery-like sequences.
Let $\alpha$, $\beta$ and $\gamma$ be fixed, and consider the recurrence relations
\begin{equation}
\label{recw1}
(n+1)^2t(n+1)=(\alpha n^2+\alpha n+\beta)t(n)+\gamma\,n^2t(n-1)
\end{equation}
and
$$
(n+1)^3T(n+1) = -(2n+1)(\alpha n^2+\alpha n+\alpha-2\beta)T(n)-(\alpha^2+4\gamma) n^3T(n-1).
$$
We assume that $n$ is a non-negative integer in each recurrence relation, and use the single initial
condition $t(0)=T(0)=1$ to start each sequence.

\begin{table}[h]
\caption{Solutions to recurrence relations}
\begin{center}
{\scriptsize\begin{tabular}{|c|c|c|}
\hline
&&\\
 $(\alpha,\beta,\gamma)$ & $t(n)$ & $T(n)$  \\
&&\\
\hline\hline && \\
$(11,3,1)$
& $\displaystyle{\sum_{j} {\binom{n}{j}}^2\binom{n+j}{j}}$ &
${\displaystyle{\sum_{j}} (-1)^{j+n}\binom{n}{j}^3 \binom{4n-5j}{3n}}$
 \\ &&\\
$(-17,-6,-72)$ & $\displaystyle{\sum_{j,\ell}(-8)^{n-j}\binom{n}{j}\binom{j}{\ell}^3}$ &
$\displaystyle{\sum_{j} \binom{n}{j}^2\binom{n+j}{j}^2}$
\\ &&\\
$(10,3,-9)$ &
$\displaystyle{\sum_{j} \binom{n}{j}^2\binom{2j}{j}} = \sum_{j+k+\ell=n}\biggl(\frac{n!}{j!k!\ell!}\biggr)^2$ &
$\displaystyle{(-1)^{n}\sum_{j} \binom{n}{j}^2\binom{2j}{j}\binom{2n-2j}{n-j}}$
 \\ &&\\
$(7,2,8)$ &
$\displaystyle{\sum_{j} \binom{n}{j}^3}$ &
${\displaystyle{\sum_{j}} (-3)^{n-3j}\binom{n+j}{j}\binom{n}{j}\binom{n-j}{j}\binom{n-2j}{j}}$
 \\ && \\
\hline
\end{tabular}}
\end{center}
\label{table1}
\end{table}

Define the generating functions
$$
f(x) = \sum_{n=0}^\infty t(n)x^n,  \quad
F(x) = \sum_{n=0}^\infty \binom{2n}{n} t(n)x^n \quad\text{and}\quad
G(x) = \sum_{n=0}^\infty T(n)x^n.
$$
It is known \cite{almkvistetal,ctyz} that
\begin{align}
\label{zs}
f(x)^2
&=\frac{1}{1+\gamma x^2}\,F\biggl(\frac{x(1-\alpha x-\gamma x^2)}{(1+\gamma x^2)^2}\biggr)
\\
&=\frac{1}{1-\alpha x-\gamma x^2}\,G\biggl(\frac{x}{1-\alpha x-\gamma x^2}\biggr).
\nonumber
\end{align}
When $\gamma=0$, the first equality in \eqref{zs} reduces to \eqref{clauclau}.

Let $f_{6a}(x)$, $F_{6a}(x)$ and $G_{6a}(x)$ be the functions $f(x)$, $F(x)$ and $G(x)$ respectively, for the parameter
values
$$
(\alpha,\beta,\gamma)=(-17,-6,-72).
$$
Similarly, let $f_{6b}(x)$, $F_{6b}(x)$ and $G_{6b}(x)$ be the respective functions that correspond to the values
$$
(\alpha,\beta,\gamma)=(10,3,-9),
$$
while $f_{6c}(x)$, $F_{6c}(x)$ and $G_{6c}(x)$ are the respective functions that correspond to the values
$$
(\alpha,\beta,\gamma)= (7,2,8)
$$
and $f_5(x)$, $F_5(x)$ and $G_5(x)$ are the respective functions that correspond to the values
$$
(\alpha,\beta,\gamma)= (11,3,1).
$$
Formulas for the coefficients $t(n)$ and $T(n)$ that involve sums of binomial coefficients are known
in the four special cases defined above, and these are listed in Table~\ref{table1}.
The entries for $t(n)$ come from a list that is originally due to D.~Zagier \cite[Section 4]{zagier}.
It may also be mentioned that the numbers $T(n)$ in the cases $(\alpha,\beta,\gamma)=(-17,-6,-72)$, $(10,3,-9)$
and $(7,2,8)$ are called the Ap\'ery numbers, Domb numbers and Almkvist--Zudilin numbers, respectively.

Finally, let $H(x)$ be defined by
$$
H(x)=\sum_{n=0}^\infty \biggl\{\sum_{j=0}^n {\binom{n}{j}}^4\biggr\}x^n.
$$

In our results below we use the labels ``Level 1'', ``Level 2'' etc., to distinguish the appearance
of different generating functions; the function $H(x)$ is labeled ``Level 10'' while the other labels can be extracted from the subscripts of
the corresponding functions. Levels themselves, in particular, their origins and meaning, are discussed further in Section~\ref{sec7} in the context of modular forms.

\section{Results: Part 1}
\label{sec3}

Our first meta-identity is the subject of the following theorem.

\begin{Theorem}
\label{t1}
The following forty-two functions are equal, in a neighborhood of $p=0$:
\begin{align}
\intertext{Level $1$\textup:}
\label{11}
\lefteqn{
\frac{1}{(1 +4p -8p^2 ) ^{1/2}(1 +228p-408p^2   -128p^3 -192p^4+768p^5-512p^6) ^{1/2}}
}  \\
&\quad\times
F_1\biggl(\frac{p( 1-p)^3 ( 1-4p) ^{12} ( 1-2p)( 1+2p)^3 }{ (1 +4p -8p^2 )^3 (1 +228p-408p^2   -128p^3 -192p^4+768p^5-512p^6)^3}\biggr)
\nonumber
\displaybreak[2]\\
\label{12}
&=\frac{1}{(1 -2p + 4p^2) ^{1/2}(1 -6p +240p^2-920p^3+960p^4-96p^5 +64p^6) ^{1/2}}
\\
&\quad\times
F_1\biggl(\frac {p^2 ( 1-p )^6 ( 1-4p )^6( 1-2p )^2 ( 1+2p )^6} { (1 -2p + 4p^2)^3 (1 -6p +240p^2-920p^3+960p^4-96p^5 +64p^6)^3} \biggr)
\nonumber
\displaybreak[2]\\
\label{13}
&=\frac{1}{(1+4p -8p^2 ) ^{1/2} (1 -12p +72p^2-128p^3-192p^4+768p^5-512p^6 ) ^{1/2}}
\\
&\quad\times
F_1\biggl(\frac {p^3 ( 1-p )  ( 1-4p )^4 ( 1-2p )^3 ( 1+2p ) } { (1+4p -8p^2 )^3 (1 -12p +72p^2-128p^3-192p^4+768p^5-512p^6 )^3}\biggr)
\nonumber
\displaybreak[2]\\
\label{14}
&=\frac{1}{(1-2p -2p^2 ) ^{1/2}(1-6p +6p^2+16p^3+204p^4-456p^5-8p^6) ^{1/2}}
\\
&\quad\times
F_1\biggl(\frac {p^4 ( 1-p ) ^{12} ( 1-4p )^3 ( 1-2p )  ( 1+2p )^3}{ (1-2p -2p^2 )^3(1-6p +6p^2+16p^3+204p^4-456p^5-8p^6)^3} \biggr)
\nonumber
\displaybreak[2]\\
\label{16}
&=\frac{1}{(1-2p+ 4p^2 ) ^{1/2}( 1-6p+40p^3-96p^5 +64p^6) ^{1/2}}
\\
&\quad\times
F_1\biggl(\frac {p^6 ( 1-p )^2 ( 1-4p )^2 ( 1-2p )^6 ( 1+2p )^2}{ (1-2p+ 4p^2 )^3( 1-6p+40p^3-96p^5 +64p^6)^3 } \biggr)
\nonumber
\displaybreak[2]\\
\label{112}
&=\frac{1}{(1-2p -2p^2 ) ^{1/2} (1-6p +6p^2+16p^3-36p^4+24p^5 -8p^6) ^{1/2}}
\\
&\quad\times
F_1\biggl(\frac {p^{12} ( 1-p )^4 ( 1-4p ) ( 1-2p )^3 ( 1+2p ) } { (1-2p -2p^2 )^3 (1-6p +6p^2+16p^3-36p^4+24p^5 -8p^6)^3} \biggr)
\nonumber
\displaybreak[2]\\
\label{1m1}
&=\frac{1}{(1 -8p +4p^2 ) ^{1/2} (1 -240p  +1932p^2 -5888p^3 +7728p^4-3840p^5+64p^6) ^{1/2}}
\\
&\quad\times
F_1\biggl(\frac {-p ( 1-p )^3 ( 1-4p )^3 ( 1-2p )^4  ( 1+2p ) ^{12}} { (1 -8p +4p^2 )^3 (1 -240p  +1932p^2 -5888p^3 +7728p^4-3840p^5+64p^6)^3}\biggr)
\nonumber
\displaybreak[2]\\
\label{1m3}
&=\frac{1}{(1 -8p +4p^2 ) ^{1/2} (1 +12p^2   -128p^3 +48p^4+64p^6) ^{1/2}}
\\
&\quad\times
F_1\biggl(\frac {-p^3 ( 1-p )  ( 1-4p )  ( 1-2p )^{12}  ( 1+2p )^4} { (1 -8p +4p^2 )^3 (1 +12p^2   -128p^3 +48p^4+64p^6)^3}\biggr)
\nonumber
\displaybreak[2]\\
\intertext{Level $2$\textup:}
\label{21}
&=\frac{1}{1+20p-48p^2+32p^3-32p^4}\,
F_2\biggl(\frac {p ( 1-p )^3 ( 1-4p )^6 ( 1-2p )  ( 1+2p )^3} { (1 +20p -48p^2 +32p^3-32p^4)^4}\biggr)
\\
\label{22}
&=\frac{1}{1-4p+24p^2-40p^3 -8p^4}\,
F_2\biggl(\frac {p^2 ( 1-p )^6 ( 1-4p )^3 ( 1-2p )  ( 1+2p )^3}{ ( 1-4p+24p^2 -40p^3  -8p^4)^4}\biggr)
\displaybreak[2]\\
\label{23}
&=\frac{1}{1-4p+32p^3-32p^4}\,
F_2\biggl(\frac {p^3 ( 1-p )  ( 1-4p )^2 ( 1-2p )^3 ( 1+2p ) } { (1 -4p +32p^3-32p^4 )^4}\biggr)
\displaybreak[2]\\
\label{26}
&=\frac{1}{1-4p+8p^3-8p^4}\,
F_2\biggl(\frac {p^6 ( 1-p )^2 ( 1-4p ) ( 1-2p )^3 ( 1+2p ) } { (1-4p +8p^3 -8p^4 )^4}\biggr)
\displaybreak[2]\\
\label{2m1}
&=\frac{1}{1-28p+96p^2-112p^3+16p^4}\,
F_2\biggl(-\frac {p ( 1-p )^3 ( 1-4p )^3 ( 1-2p )^2  ( 1+2p )^6} { (1 -28p +96p^2 -112p^3+16p^4)^4}\biggr)
\displaybreak[2]\\
\label{2m3}
&=\frac{1}{1-4p-16p^3+16p^4}\,
F_2\biggl(-\frac {p^3 ( 1-p )  ( 1-4p ) ( 1-2p )^6  ( 1+2p )^2} { (1 -4p -16p^3+16p^4)^4}\biggr)
\displaybreak[2]\\
\intertext{Level $3$\textup:}
\label{31}
&=\frac{1}{(1+4p-8p^2)^2}\,
F_3\biggl(\frac {p ( 1-p )  ( 1-4p )^4 ( 1-4p^2 ) }{ ( 1+4p-8p^2 )^6}\biggr)
\\
\label{32}
&=\frac{1}{(1-2p+4p^2)^2}\,
F_3\biggl(\frac {p^2 ( 1-p )^2 ( 1-4p )^2 ( 1-4p^2 )^2}{ ( 1-2p+4p^2 )^6}\biggr)
\displaybreak[2]\\
\label{34}
&=\frac{1}{(1-2p-2p^2)^2 }\,
F_3\biggl(\frac {p^4 ( 1-p )^4 ( 1-4p ) ( 1-4p^2 ) }{ ( 1-2p-2p^2 )^6}\biggr)
\displaybreak[2]\\
\label{3m}
&=\frac{1}{(1-8p+4p^2)^2 }\,
F_3\biggl(-\frac {p ( 1-p ) ( 1-4p ) ( 1-4p^2 )^4 }{ ( 1-8p+4p^2 )^6}\biggr)
\displaybreak[2]\\
\intertext{Level $4$\textup:}
\label{41}
&=\frac {1}{ ( 1-2p )  ( 1+2p )^3}\,
F_4\biggl(\frac {p ( 1-p )^3 ( 1-4p )^3}{ ( 1-2p )^2 ( 1+2p )^6}\biggr)
\\
\label{43}
&=\frac {1}{ ( 1-2p )^3 ( 1+2p ) }\,
F_4\biggl(\frac {p^3 ( 1-p )  ( 1-4p ) }{ ( 1-2p )^6 ( 1+2p )^2}\biggr)
\displaybreak[2]\\
\label{4m1}
&=\frac {1}{ ( 1-4p )^3 }\,
F_4\biggl(-\frac {p ( 1-2p )(1+2p)^3(1-p)^3 }{( 1-4p )^6}\biggr)
\displaybreak[2]\\
\label{4m3}
&=\frac {1}{ ( 1-4p ) }\,
F_4\biggl(-\frac {{p^3} ( 1-2p )^3(1+2p)(1-p)  }{( 1-4p )^2}\biggr)
\displaybreak[2]\\
\label{4m2}
&=\frac {1}{(1-2p)^{1/2}(1+2p)^{3/2} ( 1-4p ) ^{3/2} }\,
F_4\biggl(-\frac {p^2 ( 1-p )^6}{(1-2p)(1+2p)^3 ( 1-4p )^3}\biggr)
\displaybreak[2]\\
\label{4m6}
&=\frac {1}{(1-2p)^{3/2}(1+2p)^{1/2} ( 1-4p ) ^{1/2} }\,
F_4\biggl(-\frac{p^6( 1-p )^2}{(1-2p)^3(1+2p) ( 1-4p )}\biggr)
\displaybreak[2]\\
\intertext{Level $6$, functions $F$\textup:}
\label{Faa1}
&=\frac{1}{1-16p+24p^2+32p^3-32p^4}\,F_{6a}\biggl(\frac{p(1-p)(1-4p)^2(1-2p)(1+2p)}{(1-16p+24p^2+32p^3-32p^4)^2}\biggr)
\\
\label{Faa2}
&=\frac{1}{1-4p-12p^2+32p^3-8p^4}\,F_{6a}\biggl(\frac{p^2(1-p)^2(1-4p)(1-2p)(1+2p)}{(1-4p-12p^2+32p^3-8p^4)^2}\biggr)
\\
\label{Faa3}
&=\frac{1}{1+8p-48p^2+32p^3+16p^4}\,F_{6a}\biggl(\frac{-p(1-p)(1-4p)(1-2p)^2(1+2p)^2}{(1+8p-48p^2+32p^3+16p^4)^2}\biggr)
\displaybreak[2]\\
\label{Fbb1}
&=\frac{1}{(1-2p+4p^2)(1+4p-8p^2)}\,F_{6b}\biggl(\frac{p(1-p)(1-4p)^2(1-2p)(1+2p)}{(1-2p+4p^2)^2(1+4p-8p^2)^2}\biggr)
\\
\label{Fbb2}
&=\frac{1}{(1-2p-2p^2)(1-2p+4p^2)}\,F_{6b}\biggl(\frac{p^2(1-p)^2(1-4p)(1-2p)(1+2p)}{(1-2p-2p^2)^2(1-2p+4p^2)^2}\biggr)
\\
\label{Fbb3}
&=\frac{1}{(1-2p+4p^2)(1-8p+4p^2)}\,F_{6b}\biggl(\frac{-p(1-p)(1-4p)(1-2p)^2(1+2p)^2}{(1-2p+4p^2)^2(1-8p+4p^2)^2}\biggr)
\displaybreak[2]\\
\label{Fcc1}
&=\frac{1}{1+8p^2-32p^3+32p^4}\,F_{6c}\biggl(\frac{p(1-p)(1-4p)^2(1-2p)(1+2p)}{(1+8p^2-32p^3+32p^4)^2}\biggr)
\\
\label{Fcc2}
&=\frac{1}{1-4p+4p^2+8p^4}\,F_{6c}\biggl(\frac{p^2(1-p)^2(1-4p)(1-2p)(1+2p)}{(1-4p+4p^2+8p^4)^2}\biggr)
\\
\label{Faa3}
&=\frac{1}{1-8p+32p^2-32p^3+16p^4}\,F_{6c}\biggl(\frac{-p(1-p)(1-4p)(1-2p)^2(1+2p)^2}{(1-8p+32p^2-32p^3+16p^4)^2}\biggr)
\displaybreak[2]\\
\intertext{Level $6$, functions $G$\textup:}
\label{aa1}
&=\frac{1}{(1+2p)(1-p)}\,G_{6a}\biggl(\frac{p(1-4p)^2(1-2p)}{(1+2p)(1-p)}\biggr)
\\
\label{aa2}
&=\frac{1}{(1-2p)(1-p)^2}\,G_{6a}\biggl(\frac{p^2(1+2p)(1-4p)}{(1-2p)(1-p)^2}\biggr)
\\
\label{aa3}
&=\frac{1}{(1-p)(1-4p)(1-2p)^2}\,G_{6a}\biggl(\frac{-p(1+2p)^2}{(1-p)(1-4p)(1-2p)^2}\biggr)
\displaybreak[2]\\
\label{bb1}
&=\frac{1}{(1-4p)^2}\,G_{6b}\biggl(\frac{p(1+2p)(1-2p)(1-p)}{(1-4p)^2}\biggr)
\\
\label{bb2}
&=\frac{1}{(1-2p)(1+2p)(1-4p)}\,G_{6b}\biggl(\frac{p^2(1-p)^2}{(1-2p)(1+2p)(1-4p)}\biggr)
\\
\label{bb3}
&=\frac{1}{(1-2p)^2(1+2p)^2}\,G_{6b}\biggl(\frac{-p(1-p)(1-4p)}{(1-2p)^2(1+2p)^2}\biggr)
\displaybreak[2]\\
\label{cc1}
&=\frac{1}{(1-p)(1+2p)(1-4p)^2}\,G_{6c}\biggl(\frac{p(1-2p)}{(1-p)(1+2p)(1-4p)^2}\biggr)
\\
\label{cc2}
&=\frac{1}{(1+2p)(1-p)^2(1-4p)}\,G_{6c}\biggl(\frac{p^2(1-2p)}{(1+2p)(1-p)^2(1-4p)}\biggr)
\\
\label{cc3}
&=\frac{1}{(1-p)(1-4p)(1+2p)^2}\,G_{6c}\biggl(\frac{-p(1-2p)^2}{(1-p)(1-4p)(1+2p)^2}\biggr).
\end{align}
\end{Theorem}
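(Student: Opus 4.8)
The plan is to prove the theorem by the differential-equations method, reducing the forty-two-fold equality to a finite verification together with a matching of initial data at $p=0$. The starting observation is that every one of the forty-two expressions is analytic at $p=0$ and takes the value $1$ there: in each case the argument of the relevant hypergeometric or Apéry-like function is divisible by a positive power of $p$, hence vanishes at $p=0$, while every algebraic prefactor equals $1$ at $p=0$. So all forty-two functions share the same constant term, and the task is to show they agree identically.

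Next I would set up the linear ODEs. Each $F_\ell(x)$, being a $\pFq{3}{2}{\frac12,s,1-s}{1,1}{4C_sx}$, satisfies an explicit third-order linear differential equation coming from the hypergeometric equation; each of $F_{6a},F_{6b},F_{6c}$ satisfies the third-order equation encoded by its defining recurrence \eqref{recw1} (after the $\binom{2n}{n}$-twist), and likewise each of $G_{6a},G_{6b},G_{6c}$ satisfies the third-order equation read off from the recurrence for $T(n)$. For a candidate identity $A(p)\,\Phi(X(p))=B(p)\,\Psi(Y(p))$, where $\Phi,\Psi$ are among these functions, I would pull back the ODE for $\Phi$ through the algebraic substitution $x=X(p)$ and conjugate by the prefactor $A(p)$; this produces a third-order linear ODE in $p$ satisfied by the left-hand side. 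The heart of the verification is to check that this pulled-back operator is, for every one of the forty-two expressions, one and the same operator $\mathcal L$ with rational coefficients in $p$. Once that is established, each expression solves $\mathcal Ly=0$, and since $\mathcal L$ has order three it is enough to match the first three Taylor coefficients at $p=0$ — which simultaneously pins down the correct analytic branch of each square root and algebraic prefactor — to conclude that all forty-two solutions coincide.

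To keep the computation finite and conceptual rather than performing forty-two independent pullbacks, I would exploit the two structural reductions already available in the paper. Clausen's identity \eqref{clauclau} rewrites each $F_\ell$ as the square of a $\pFq{2}{1}{s,1-s}{1}{\cdot}$, and relation \eqref{zs} rewrites each level-$6$ function $F_{6\cdot}$ and $G_{6\cdot}$ as an algebraic factor times $f_{6\cdot}(\cdot)^2$. Consequently every one of the forty-two expressions should be recognizable as the square of a single weight-one object $g(p)$; the within-level identities (for instance \eqref{41}--\eqref{4m6}, all built from $F_4$) then become classical algebraic transformations of a squared $\pFq{2}{1}{\frac12,\frac12}{1}{\cdot}$, that is, the Atkin--Lehner and degeneracy relations for that level, while the across-level identities reduce to the classical Goursat-type transformations between $\pFq{2}{1}{s,1-s}{1}{\cdot}$ at $s=\tfrac16,\tfrac14,\tfrac13,\tfrac12$. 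I would therefore establish $g(p)=\bigl(A_i(p)\,\Phi_i(X_i(p))\bigr)^{1/2}$ for one reference expression and then derive the remaining forty-one from a small number of base transformations together with transitivity of equality.

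I expect the main obstacle to be computational mass rather than conceptual difficulty: the substitutions $X_i(p)$ are high-degree rational functions, so verifying that the pulled-back operators all coincide with $\mathcal L$ — equivalently, that the squared object $g(p)$ is genuinely substitution-independent — demands careful, machine-assisted polynomial algebra and bookkeeping of the branch of each radical. A secondary point needing care is the analytic justification: the identities are asserted only in a neighborhood of $p=0$, so one must ensure that the chosen branches of $(1+4p-8p^2)^{1/2}$, the various sextic radicals, and the square roots arising from Clausen all agree with the power series fixed by the initial-value matching, and that the domains of convergence of the inner series overlap a common neighborhood of $p=0$.
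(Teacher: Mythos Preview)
Your core strategy --- pull back each underlying third-order linear ODE through the algebraic substitution and prefactor, verify that all forty-two pullbacks coincide with a single operator $\mathcal L$ in $p$, and then match the first three Taylor coefficients at $p=0$ --- is exactly the paper's proof in Section~\ref{sec6}, where the authors assert precisely this (recording the common expansion $1+4p+16p^2+O(p^3)$) and delegate the mechanical verification to computer algebra. Your optional Clausen/squaring reduction is not needed for the argument but correctly anticipates the content of Theorem~\ref{t2} and the modular-forms viewpoint of Section~\ref{sec7}.
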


\section{Results: Part 2}
\label{sec4}

The results in this section are consequences of the results in previous section by taking square roots.
For example, by \eqref{clauclau} we find that the ${}_3F_2$ hypergeometric function in \eqref{21}
is related to the ${}_2F_1$ function by
\begin{multline}
\label{claueg}
\pFq{3}{2}{\frac14,\frac12,\frac34}{1,1}
{256\,{\frac {p ( 1-p )^3 ( 1-4p )^6
 ( 1-2p )  ( 1+2p )^3}
 { (1 +20p -48p^2 +32p^3-32p^4)^4}}}
\\
= \biggl\{\pFq{2}{1}{\frac14,\frac34}{1}
{{\frac {64p ( 1-p )^3 ( 1-2p )  ( 1+2p )^3}
 { (1 +20p -48p^2 +32p^3-32p^4)^2}}}\biggr\}^2.
\end{multline}
The argument in the ${}_2F_1$ function is obtained by noting that the solution of
$$
4x(1-x) = 256\,\frac{p(1-p)^3(1-4p)^6(1-2p )( 1+2p )^3}{ (1 +20p -48p^2 +32p^3-32p^4)^4}
$$
that satisfies $x=0$ when $p=0$ is given by
\begin{equation}
\label{qe}
x = \frac {64p ( 1-p )^3 ( 1-2p )  ( 1+2p )^3}{ (1 +20p -48p^2 +32p^3-32p^4)^2}.
\end{equation}
By applying \eqref{claueg} and taking the square root of the expression in \eqref{21} we get
an expression that involves the ${}_2F_1$ function. This can be done, in principle, for all
of the hypergeometric expressions in \eqref{11}--\eqref{4m6}.
In a similar way, the identity \eqref{zs} can be
used to take square roots of the expressions in \eqref{Faa1}--\eqref{cc3}.
After taking square roots, the arguments
of the resulting functions are not always rational functions of~$p$ as they are for the example~\eqref{qe}, above;
sometimes they are
algebraic funcions of $p$ involving square roots that arise from solving quadratic equations.

In Theorem~\ref{t2} below, we list all of the functions obtained by taking square roots of the
expressions in \eqref{11}--\eqref{cc3} for which the arguments are rational functions of~$p$.
The identities have been numbered so that the
formula $(4.x)$ in Theorem~\ref{t2} below is the square root of the corresponding expression~$(3.x)$ in
Theorem~\ref{t1} above.

\setcounter{equation}{8}

\begin{Theorem}
\label{t2}
The following twenty-three functions are equal, in a neighborhood of $p=0$:
\begin{align}
\intertext{Level $2$\textup:}
\label{2F21}
\lefteqn{
\frac{1}{(1 +20p -48p^2 +32p^3-32p^4 )^{1/2}}\,
f_2\biggl(\frac{p ( 1-p )^3 ( 1-2p )  ( 1+2p )^3} { (1 +20p -48p^2 +32p^3-32p^4)^2}\biggr)
}
\\
\label{2F22}
&= \frac{1}{(1-4p+24p^2 -40p^3  -8p^4)^{1/2} }\,
f_2\biggl(\frac{p^2 ( 1-p )^6}{ ( 1-4p+24p^2 -40p^3  -8p^4)^2}\biggr)
\displaybreak[2]\\
\label{2F23}
&=\frac{1}{(1 -4p +32p^3-32p^4 )^{1/2} }\,
f_2\biggl(\frac{p^3 ( 1-p )  ( 1-2p )^3 ( 1+2p ) } { (1 -4p +32p^3-32p^4 )^2}\biggr)
\displaybreak[2]\\
\label{2F26}
&=\frac{1}{(1-4p +8p^3 -8p^4 )^{1/2} }\,
f_2\biggl(\frac {64p^6 ( 1-p )^2 } { (1-4p +8p^3 -8p^4 )^2}\biggr)
\displaybreak[2]\\
\label{2F2m1}
&=\frac{1}{(1 -28p +96p^2 - 112p^3+16p^4 )^{1/2}}\,
f_2\biggl(\frac {-p ( 1-p )^3 ( 1-4p )^3 } { (1 -28p +96p^2 -112p^3+16p^4)^2}\biggr)
\displaybreak[2]\\
\label{2F2m3}
&=\frac{1}{(1-4p-16p^3+16p^4)^{1/2}}\,
f_2\biggl(\frac{-p^3 ( 1-p )  ( 1-4p ) } { (1 -4p -16p^3+16p^4)^2}\biggr)
\displaybreak[2]\\
\intertext{Level $3$\textup:}
\label{2F31}
&=\frac{1}{(1+4p-8p^2)}\,f_3\biggl(\frac{p ( 1-2p )}{ ( 1+4p-8p^2 )^3}\biggr)
\\
\label{2F32}
&=\frac{1}{(1-2p+4p^2)}\,f_3\biggl(\frac{p^2 ( 1-2p )^2 }{( 1-2p+4p^2)^3}\biggr)
\displaybreak[2]\\
\label{2F34}
&=\frac{1}{(1-2p-2p^2)}\,f_3\biggl(\frac{p^4 ( 1-2p ) } { ( 1-2p-2p^2 )^3}\biggr)
\displaybreak[2]\\
\label{2F3m}
&=\frac{1}{(1-8p+4p^2)}\,f_3\biggl(\frac{-p( 1-2p )^4}{ ( 1-8p+4p^2 )^3}\biggr)
\displaybreak[2]\\
\intertext{Level $4$\textup:}
\label{2F41}
&=\frac{1}{(1-2p )^{1/2}  ( 1+2p ) ^{3/2}}\,f_4\biggl(\frac{p ( 1-p )^3 }{ ( 1-2p ) ( 1+2p )^3}\biggr)
\\
\label{2F43}
&=\frac {1}{ ( 1-2p ) ^{3/2} ( 1+2p )^{1/2} }\,f_4\biggl(\frac{p^3 ( 1-p )}{ ( 1-2p )^3 ( 1+2p )}\biggr)
\displaybreak[2]\\
\label{2F4m1}
&=\frac {1}{ ( 1-4p )^{3/2} }\,f_4\biggl(\frac{-p (1-p)^3 }{( 1-4p )^3}\biggr)
\displaybreak[2]\\
\label{2F4m3}
&=\frac {1}{ ( 1-4p )^{1/2} }\,f_4\biggl(\frac {-p^3(1-p)  }{( 1-4p )}\biggr)
\displaybreak[2]\\
\intertext{Level $6$\textup:}
\setcounter{equation}{24}
\label{a1}
&=\frac{1}{(1 -4p)^2}\,f_{6a}\biggl(\frac{p\,(1-2p)}{(1-4p)^2}\biggr)
\\
\label{a2}
&=\frac{1}{(1 -4p)(1+2p)}\,f_{6a}\biggl(\frac{p^2}{(1-4p)(1+2p)}\biggr)
\\
\label{a3}
&=\frac{1}{(1 + 2p)^2}\,f_{6a}\biggl(\frac{-p}{(1+2p)^2}\biggr)
\displaybreak[2]\\
\label{b1}
&=\frac{1}{(1 - p)(1+2p)}\,f_{6b}\biggl(\frac{p(1-2\, p)}{(1-p)(1+2p)}\biggr)
\\
\label{b2}
&=\frac{1}{(1 - p)^2}\,f_{6b}\biggl(\frac{p^2}{(1-p)^2}\biggr)
\\
\label{b3}
&=\frac{1}{(1 - p)(1-4p)}\,f_{6b}\biggl(\frac{-p}{(1-p)(1-4p)}\biggr)
\displaybreak[2]\\
\label{c1}
&=f_{6c}\bigl(p(1-2p)\bigr)
\\
\label{c2}
&=\frac{1}{1-2p}\,f_{6c}\biggl(\frac{p^2}{1-2p}\biggr)
\\
\label{c3}
&=\frac{1}{(1-2p)^2}\,f_{6c}\biggl( \frac{-p}{(1-2p)^2}\biggr).
\end{align}
\end{Theorem}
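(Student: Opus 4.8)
The plan is to derive Theorem~\ref{t2} from Theorem~\ref{t1} by extracting square roots, along the lines already indicated by \eqref{claueg}--\eqref{qe}. The tool for the levels $2,3,4$ is the Clausen-type identity \eqref{clauclau}, and for the level-$6$ functions it is \eqref{zs}. For a level with parameter $s$ and normalizing constant $C_s$, writing $f_\ell(X)={}_2F_1(s,1-s;1;C_sX)$ and $F_\ell(z)={}_3F_2(\tfrac12,s,1-s;1,1;4C_sz)$ and putting $u=C_sX$ in \eqref{clauclau} turns the argument $4u(1-u)$ into $4C_s\,X(1-C_sX)$, whence
\[
\{f_\ell(X)\}^2=F_\ell\bigl(X(1-C_sX)\bigr).
\]
Thus for each level-$2,3,4$ entry of Theorem~\ref{t2} I would square the right-hand side, apply this relation, and verify two elementary facts: that the square of the Theorem~\ref{t2} prefactor equals the Theorem~\ref{t1} prefactor, and that the $f$-argument $X$ and the $F$-argument $z$ obey $z=X(1-C_sX)$. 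For \eqref{2F21} against \eqref{21}, with $C_2=64$ and $Q=1+20p-48p^2+32p^3-32p^4$, the latter is the polynomial identity $Q^2-64\,p(1-p)^3(1-2p)(1+2p)^3=(1-4p)^6$, checked directly; the remaining level-$2,3,4$ entries reduce to analogous finite identities.

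For the level-$6$ entries I would use both equalities in \eqref{zs} with the appropriate triple $(\alpha,\beta,\gamma)$. Squaring a single entry such as \eqref{a1}, with $X=p(1-2p)/(1-4p)^2$ and $(\alpha,\beta,\gamma)=(-17,-6,-72)$, the first equality in \eqref{zs} matches it to the $F_{6a}$-entry \eqref{Faa1} once one checks $(1-4p)^4(1-72X^2)=1-16p+24p^2+32p^3-32p^4$, while the second equality matches it to the $G_{6a}$-entry \eqref{aa1} via the cancellation $(1-4p)^4+17p(1-2p)(1-4p)^2+72p^2(1-2p)^2=(1+2p)(1-p)$. Hence each of the nine level-$6$ entries of Theorem~\ref{t2} has a square that simultaneously equals one $F$- and one $G$-entry of Theorem~\ref{t1}; those two are already equal by Theorem~\ref{t1}, which is exactly why the eighteen level-$6$ functions of Theorem~\ref{t1} collapse to nine here.

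Having shown that the square of every function in Theorem~\ref{t2} equals the corresponding function in Theorem~\ref{t1}, I would invoke Theorem~\ref{t1}: all those squares are equal. Each function in Theorem~\ref{t2} is a power series in $p$ with constant term $1$, since every $f_\ell(0)=1$, every prefactor tends to $1$, and every argument vanishes at $p=0$. Two functions analytic near $p=0$ with equal squares and common value $1$ at $p=0$ coincide, so all twenty-three functions are equal in a neighborhood of $p=0$.

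The conceptual content is light; the main obstacle is bookkeeping, in two parts. First, one must certify the polynomial identities above for every entry. Second, one must justify the count $23$ rather than $42$: extracting the square root requires solving $C_sX^2-X+z=0$, so a rational $f$-argument exists precisely when $1-4C_sz=(1-2C_sX)^2$ is a perfect square in $\mathbb Q(p)$. This fails for all eight level-$1$ functions and for the two level-$4$ functions \eqref{4m2} and \eqref{4m6}, whose square roots are genuinely irrational in $p$ and are therefore omitted; combined with the $F/G$ merger in level~$6$, this leaves $6+4+4+9=23$ functions.
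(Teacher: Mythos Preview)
Your proof is correct and is precisely the square-root strategy the paper sketches in the preamble to Theorem~\ref{t2} (the discussion around \eqref{claueg}--\eqref{qe}); the polynomial identities you check are the right ones, and the uniqueness-of-square-root argument via constant term~$1$ is valid.  However, the paper's \emph{formal} proof in Section~\ref{sec6} takes a different route: it shows directly that each of the twenty-three functions satisfies the same second-order linear ODE~\eqref{dzdp} with initial data $y(0)=1$, $y'(0)=2$, without invoking Theorem~\ref{t1} at all.  Your approach has the virtue of making the $42\to 23$ structure transparent (Clausen/\eqref{zs} collapses each $F$--$G$ pair and discards the level-$1$ and the two exceptional level-$4$ entries), at the cost of depending on Theorem~\ref{t1}; the paper's ODE proof is self-contained and uniform, but opaque about why exactly these twenty-three functions arise.
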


\section{Results: Part 3}
\label{sec5}

The results in this section involve transformations of degrees $2$, $5$ and $10$.

\begin{Theorem}
\label{t3}
The following thirteen functions are equal, in a neighborhood of $p=0$:
\begin{align}
\intertext{Level $1$\textup:}
\label{511}
\lefteqn{
\frac{1}{(1+236p +1440p^2+1920p^3+3840p^4+256p^5+256p^6)^{1/2}}
}
\\
&\quad\times
F_1\biggl(\frac {p ( 1-4p ) ^{10} ( 1+p )^5}{(1 +236p +1440p^2  +1920p^3 +3840p^4 +256p^5 + 256p^6 )^3}\biggr)
\nonumber
\displaybreak[2]\\
\label{512}
&=\frac{1}{(1-4p+240p^2-480p^3+1440p^4-944p^5+16p^6)^{1/2}}
\\
&\quad\times
F_1\biggl(\frac {p^2 ( 1-4p )^5 ( 1+p ) ^{10}}{ ( 1 -4p +240p^2 -480p^3 +1440p^4 -944p^5 + 16p^6)^3}\biggr)
\nonumber
\displaybreak[2]\\
\label{513}
&=\frac{1}{ (1 -4p +256p^5  + 256p^6) ^{1/2}}\,
F_1\biggl(\frac{p^5 ( 1-4p )^2 ( 1+p ) }{( 1 -4p +256p^5  + 256p^6 )^3}\biggr)
\displaybreak[2]\\
\label{514}
&=\frac{1}{ ( 1 -4p +16p^5 +16p^6  ) ^{1/2}}\,
F_1\biggl(\frac {p^{10} ( 1-4p )  ( 1+p )^2}{ ( 1 -4p +16p^5 +16p^6  )^3}\biggr)
\displaybreak[2]\\
\intertext{Level $2$\textup:}
\label{521}
&=\frac{1}{ ( 1+4p^2 ) ^{1/2} ( 1+22p-4p^2  ) }\,
F_2\biggl(\frac {p ( 1+p )^5 ( 1-4p )^5}{ ( 1+4p^2 )^2 ( 1+22p-4p^2  ) ^4}\biggr)
\\
\label{522}
&=\frac{1}{ ( 1+4p^2 )^{1/2} ( 1-2p-4p^2 )  }\,
F_2\biggl(\frac {p^5 ( 1+p )  ( 1-4p ) }{ ( 1+4p^2 )^2 ( 1-2p-4p^2 )^4}\biggr)
\displaybreak[2]\\
\intertext{Level $4$\textup:}
\label{541}
&=\frac{1}{  ( 1-4p ) ^{5/2} }\,
F_4\biggl(-p\biggl(\frac{  1+p }{ 1-4p} \biggr)^5\biggr)
\\
\label{542}
&=\frac{1}{ ( 1-4p )^{1/2} }\,
F_4\biggl(-p^5\biggl(\frac{  1+p }{ 1-4p} \biggr)\biggr)
\displaybreak[2]\\
\intertext{Level $5$, functions $F$\textup:}
\label{F5aa1}
&=\frac{1}{ ( 1 + 4p^2 )^{1/2} ( 1 + 4p + 8p^2 )}\,
F_5\biggl(\frac {p ( 1-4p )^2 ( 1+p ) }{ ( 1 + 4p^2 )  ( 1 + 4p + 8p^2 )^2 } \biggr)
\\
\label{F5aa2}
&=\frac{1}{( 1 + 4p^2 )^{1/2}( 1 -2p+2p^2 ) }  \,
F_5\biggl( \frac {p^2 ( 1-4p )  ( 1+p )^2}{( 1 + 4p^2 )( 1 -2p+2p^2 )^2 } \biggr)
\displaybreak[2]\\
\intertext{Level $5$, functions $G$\textup:}
\label{5aa1}
&=\frac{1}{( 1+p )  ( 1-4p )^2}  \,
G_5\biggl( \biggl(\frac p{  1+p} \biggr)  \biggl(\frac{1}{ 1-4p} \biggr)^2 \biggr)
\\
\label{5aa2}
&=\frac{1}{( 1+p )^2 ( 1-4p )} \,
G_5\biggl( \biggl(\frac p{  1+p} \biggr)^2  \biggl(\frac{1}{ 1-4p} \biggr)  \biggr)
\displaybreak[2]\\
\intertext{Level $10$, function $H$\textup:}
\label{10aa1}
&= \frac{1}{ ( 1+4p^2 ) ^{3/2}}  \,
H \biggl( \frac {p ( 1+p )  ( 1-4p ) }{ ( 1+4p^2 )^2} \biggr).
\end{align}
\end{Theorem}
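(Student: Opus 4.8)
The plan is to prove that all thirteen expressions \eqref{511}--\eqref{10aa1} coincide by exhibiting a single third-order linear ODE in $p$ that annihilates each of them, and then invoking uniqueness of the holomorphic solution at the regular singular point $p=0$. The guiding principle is that every base function occurring here---$F_1$, $F_2$, $F_4$ via Clausen's identity \eqref{clauclau}, and $F_5$, $G_5$ via \eqref{zs}---is, up to an algebraic prefactor, the square of a weight-one object, hence satisfies a symmetric-square third-order operator whose local exponents at the origin are $0,0,0$; the level-ten function $H$ is handled separately but in the same spirit.

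First I would record, for each base function $\mathcal F\in\{F_1,F_2,F_4,F_5,G_5,H\}$, its native linear ODE $L_{\mathcal F}\,y(x)=0$. For $F_\ell$ this is the hypergeometric operator attached to $\pFq{3}{2}{\frac12,s,1-s}{1,1}{\cdot}$, equivalently the symmetric square of the second-order operator of $\pFq{2}{1}{s,1-s}{1}{\cdot}$; for $F_5$ and $G_5$ the operator is obtained from that of $f_5$ by the rational substitutions of \eqref{zs}; and for $H$ the operator is the one satisfied by the generating function of $\sum_j\binom{n}{j}^4$. In every case the exponents at $x=0$ are $0,0,0$, so the space of solutions holomorphic at $x=0$ is one-dimensional.

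Next, writing each of the thirteen expressions as $A(p)\,\mathcal F\!\left(X(p)\right)$ with the explicit algebraic $A(p)$ and $X(p)$ displayed in \eqref{511}--\eqref{10aa1}, I would push $L_{\mathcal F}$ forward under the change of independent variable $x=X(p)$ together with the gauge transformation $y\mapsto A(p)^{-1}y$, producing a third-order operator $\mathcal L_i$ in $p$ that annihilates the $i$-th expression. Taking the leftmost expression \eqref{511} as master, I would verify that every $\mathcal L_i$ equals one and the same operator $\mathcal L$. Because each $X(p)$ vanishes at $p=0$ while each prefactor $A(p)=1+O(p)$ is holomorphic and non-vanishing there, and because the native exponents $0,0,0$ are unchanged under ramified pullback, the point $p=0$ is a regular singular point of $\mathcal L$ with exponents $0,0,0$; hence $\mathcal L$ has, up to scaling, a unique solution holomorphic at $p=0$. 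Since all thirteen expressions are holomorphic at $p=0$ with value $1$, they all coincide with this normalized solution, and the theorem follows. The triple exponent $0$ is exactly what makes the argument insensitive to the differing ramification degrees $1,2,5,10$ of the various $X(p)$, and the half-integer powers in \eqref{511}, \eqref{521}, \eqref{541} and \eqref{10aa1} cause no difficulty, since their bases are $1+O(p)$ and the prefactors are therefore single-valued and holomorphic near $p=0$.

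The hard part will be the operator-matching step: the maps $X(p)$ in the level-one and level-two blocks have numerators and denominators of high degree in $p$, so conjugating each native hypergeometric operator into the common $\mathcal L$ is a substantial symbolic computation, best organized by computing $\mathcal L$ once from \eqref{511} and then checking annihilation for the remaining twelve. A secondary subtlety is the level-ten term \eqref{10aa1}: if the generating function of $\sum_j\binom{n}{j}^4$ satisfies an operator of order exceeding three, one must show that its pullback admits $\mathcal L$ as a right factor, equivalently that $(1+4p^2)^{-3/2}H\!\left(p(1+p)(1-4p)/(1+4p^2)^2\right)$ lies in the three-dimensional solution space of $\mathcal L$; this is where the distinctive degree-$2,5,10$ structure, together with the conspicuous absence of a level-ten entry from the square-root Theorem~\ref{t2}, makes the level-ten function the genuinely exceptional case. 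A more conceptual route---realizing $p$ and each $X(p)$ as Hauptmoduls and the normalized expressions as weight-one modular forms on the relevant congruence groups---is available and is the subject of Section~\ref{sec7}; it explains \emph{why} a single common ODE must exist but requires identifying the modular curves and covering maps that underlie the degrees $2$, $5$ and $10$.
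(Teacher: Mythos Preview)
Your approach is essentially the paper's own: show that each of the thirteen expressions satisfies a common third-order linear ODE in $p$, then use uniqueness of the holomorphic solution at $p=0$ (the paper phrases this as matching the first three Taylor coefficients $1+2p+6p^2+O(p^3)$, which amounts to the same thing given the triple exponent~$0$). Your caution about $H$ is unnecessary: the sequence $\sum_j\binom{n}{j}^4$ satisfies the three-term recurrence $n^3a(n)=2(2n-1)(3n^2-3n+1)a(n-1)+4(n-1)(4n-3)(4n-5)a(n-2)$, so $H$ already obeys a native third-order operator and no right-factor argument is needed.
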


\section{Proofs of Theorems \ref{t1}, \ref{t2} and \ref{t3}}
\label{sec6}

In this section we will provide proofs of Theorems \ref{t1}, \ref{t2} and \ref{t3}.
We begin with a proof of Theorem~\ref{t2} because it
is the simplest and therefore the explicit details are the easiest to write down.

\begin{proof}[Proof of Theorem \textup{\ref{t2}}.]
The hypergeometric function
$$
\pFq{2}{1}{s,1-s}{1}{x}
$$
satisfies the second order linear differential equation
$$
\frac{\d}{\d x}\biggl(x(1-x)\,\frac{\d z}{\d x}\biggr)=s(1-s)z.
$$
By changing variables, it can be shown that each function in \eqref{2F21}--\eqref{2F4m3}
satisfies the differential equation
\begin{equation}
\label{dzdp}
\frac{\d}{\d p}\biggl(p(1-p)(1-4p)(1-2p)(1+2p)\frac{\d y}{\d p}\biggr)
=2(1-4p)(1+4p-8p^2)y.
\end{equation}
In a similar way, the recurrence relation \eqref{recw1} implies that the each of the functions $f_{6a}$, $f_{6b}$ and $f_{6c}$ satisfies
a second order linear differential equation of the form
\begin{equation}
\label{zagde22}
\frac{\d}{\d x}\biggl(x(1-\alpha x-\gamma x^2)\frac{\d z}{\d x}\biggr)=(\beta+ \gamma x)z.
\end{equation}
On specializing the parameters and changing variables, we may deduce that each function \eqref{a1}--\eqref{c3} also satisfies \eqref{dzdp}.
By direct calculation, we find that the first two terms in the
expansions about $p=0$ of each function in \eqref{2F21}--\eqref{2F4m3}
or \eqref{a1}--\eqref{c3} are given by
$$
y=1+2p+O(p^2).
$$
It follows that the twenty-three functions \eqref{2F21}--\eqref{c3} are all equal.
\end{proof}

The proofs of Theorems \ref{t1} and \ref{t3} are similar and go as follows.

\begin{proof}[Proof of Theorems \textup{\ref{t1}} and \textup{\ref{t3}}.]
Each function in \eqref{11} -- \eqref{cc3}
(respectively, \eqref{511}--\eqref{10aa1}) satisfies the same
third order linear differential equation. Moreover, the first three terms in the expansion of each function
in \eqref{11} -- \eqref{cc3}
(respectively, \eqref{511}--\eqref{10aa1}) about $p=0$ are
given by
$$
y=1 + 4p + 16p^2 +O(p^3) \qquad \text{(respectively, $y=1 + 2p + 6p^2 +O(p^3)$).}
$$
It follows that the forty-two functions in \eqref{11}--\eqref{cc3}
(respectively, the thirteen functions in \eqref{511}--\eqref{10aa1}) are equal.
\end{proof}

It may be emphasized that the proofs outlined above are conceptually simple, establish the correctness of the results,
and require very little mathematical knowledge.
However, the proofs above are not illuminating: they give no insight into the structure of the identities, nor any reason
for why the results exist or any clue as to how the identities were discovered.
In the next section, the theory of modular forms will be used to give alternative proofs that also provide an
explanation for how the identities were discovered.

In practice, the easiest way to show that each of the functions in \eqref{2F21}--\eqref{c3}
satisfies the differential equation \eqref{dzdp} is to expand each function as a power series about $p=0$ to a large number
of terms, and then use computer algebra to determine the differential equation. For example using Maple,
the differential equation satisfied by the function in~\eqref{2F21} can be determined from the first $50$ terms in the
series expansion in powers of~$p$, by entering the commands

\begin{verbatim}
> with(gfun):
> n := 64*p*(1-p)^3*(1-2*p)*(1+2*p)^3;
> d := (-32*p^4+32*p^3-48*p^2+20*p+1)^2;
> s := series(d^(-1/4)*hypergeom([1/4, 3/4], [1], n/d), p, 50):
> L := seriestolist(s):
> guesseqn(L, y(p));
\end{verbatim}

\noindent
This produces the output
\begin{multline*}
\Big[\Big\{  \left( -16\,{p}^{5}+20\,{p}^{4}-5\,{p}^{2}+p \right) {
\frac {{\rm d}^{2}}{{\rm d}{p}^{2}}}y \left( p \right) + \left( -80\,{
p}^{4}+80\,{p}^{3}-10\,p+1 \right) {\frac {\rm d}{{\rm d}p}}y \left( p
 \right)
 \\
 + \left( -64\,{p}^{3}+48\,{p}^{2}-2 \right) y \left( p
 \right) ,y \left( 0 \right) =1,\mbox {D} \left( y \right)  \left( 0
 \right) =2 \Big\}\Big] ,{\it ogf}
\end{multline*}
which is equivalent to~\eqref{dzdp}.

\section{Modular origins}
\label{sec7}

We will now explain how the identities in Theorems~\ref{t1}, \ref{t2} and \ref{t3} were found.
The explanation also puts the formulas into context and reveals why they exist.

A modular explanation for Theorem~\ref{t1} requires the theory of modular forms for level~$12$ as developed in~\cite{cooperye}.
Some of the details may be summarized as follows.
Let $h$, $p$ and $z$ be defined by
\begin{gather*}
h=q\prod_{j=1}^\infty \frac{(1-q^{12j-11})(1-q^{12j-1})}{(1-q^{12j-7})(1-q^{12j-5})},
\\
p=\frac{h}{1+h^2}
\quad\text{and}\quad
z=q\,\frac{\d}{\d q}\, \log h.
\end{gather*}
Ramanujan's Eisenstein series $P$ and $Q$ are defined by
$$
P(q)=1-24\sum_{j=1}^\infty \frac{jq^j}{1-q^j}
\quad\text{and}\quad
Q(q)=1+240\sum_{j=1}^\infty \frac{j^3q^j}{1-q^j}.
$$
Dedekind's eta-function is defined for $\Im\tau>0$ and $q=\exp(2\pi i\tau)$ by
$$
\eta(\tau)=q^{1/24}\prod_{j=1}^\infty (1-q^j).
$$

We will require the following three lemmas, extracted from the literature.

\begin{Lemma}
\label{7.1}
Suppose $k$ and $m$ are positive divisors of $12$ and $k>m$.
Then there are rational functions $r_{k,m}(h)$ such that
$$
kP(q^k)-mP(q^m)=z \times r_{k,m}(h).
$$
Furthermore, there are rational functions $s_m(h)$ and $t_m(h)$ such that
$$
Q(q^m)=z^2 \times s_m(h)
\quad\text{and}\quad
\eta^{24}(m\tau)= z^6 \times t_m(h).
$$
\end{Lemma}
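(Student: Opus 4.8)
The plan is to read each of the three identities as the statement that a weight-$0$ meromorphic modular function on the relevant genus-zero group is a rational function of the Hauptmodul $h$. The two structural inputs I would import from \cite{cooperye} are: that $h$ is a genuine Hauptmodul for the level-$12$ group $\Gamma$ in play (a genus-zero curve, so its function field is exactly $\mathbb{C}(h)$), and that $z=q\,\frac{\d}{\d q}\log h$ is a nonzero meromorphic modular form of weight $2$ on $\Gamma$. The weight of $z$ is forced by the three claims themselves: $z^6$ must match the weight-$12$ form $\eta^{24}$, so $z$ has weight $2$, and this is consistent with $z$ being the logarithmic derivative of the weight-$0$ function $h$, which always yields a weight-$2$ meromorphic form with no quasimodular anomaly.

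First I would verify the modularity and weights of the three left-hand members. For the Eisenstein combination I would use the classical fact that $E_2(\tau)-N\,E_2(N\tau)$ is a holomorphic weight-$2$ form on $\Gamma_0(N)$; writing
$$
k\,P(q^k)-m\,P(q^m)=\bigl[E_2(\tau)-m\,E_2(m\tau)\bigr]-\bigl[E_2(\tau)-k\,E_2(k\tau)\bigr]
$$
and using $k,m\mid 12$ shows that $kP(q^k)-mP(q^m)\in M_2(\Gamma_0(12))$, the anomalies of the individual $E_2$ terms cancelling. Likewise $Q(q^m)=E_4(m\tau)\in M_4(\Gamma_0(m))\subseteq M_4(\Gamma_0(12))$ and $\eta^{24}(m\tau)=\Delta(m\tau)\in S_{12}(\Gamma_0(m))\subseteq S_{12}(\Gamma_0(12))$ for every divisor $m$ of $12$.

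Next I would form the quotients. Since $z$ is a nonzero weight-$2$ form on $\Gamma$, the ratios
$$
\frac{k\,P(q^k)-m\,P(q^m)}{z},\qquad \frac{Q(q^m)}{z^2},\qquad \frac{\eta^{24}(m\tau)}{z^6}
$$
are weight-$0$ meromorphic modular functions on $\Gamma$, hence lie in the function field $\mathbb{C}(h)$; this is precisely the assertion that each equals a rational function $r_{k,m}(h)$, $s_m(h)$, or $t_m(h)$. In practice each such rational function is pinned down by a finite $q$-expansion comparison: expand both numerator and the appropriate power of $z$ as $q$-series and match principal parts at the cusps to read off the numerator and denominator polynomials in $h$.

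The main obstacle I expect is not the weight bookkeeping but confirming the two facts drawn from \cite{cooperye}: that $h$ is a true Hauptmodul, that is, that the level-$12$ curve has genus $0$ and $h$ attains each value exactly once so that its function field is exactly $\mathbb{C}(h)$; and that $z$ is a genuine weight-$2$ modular form on the same group, with trivial multiplier, rather than merely quasimodular. One must also check that all the objects above are invariant under one and the same group $\Gamma$ (which could be $\Gamma_0(12)$ itself or a group between it and its Atkin--Lehner normalizer), so that the weight-$0$ quotients really descend to $\mathbb{C}(h)$. Granting these, the matching of weights makes each quotient a weight-$0$ invariant, hence rational in $h$, and the three statements follow uniformly.
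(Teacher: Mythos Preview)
Your argument is correct and is a genuinely different route from the paper's. The paper does not argue at all: it simply cites \cite[Theorems 4.2, 4.5 and 4.6]{cooperye}, where the explicit rational functions $r_{k,m}$, $s_m$, $t_m$ are already written down. You instead give the conceptual existence proof: the numerators are honest modular forms of weights $2$, $4$, $12$ on $\Gamma_0(12)$ (your decomposition $kP(q^k)-mP(q^m)=[E_2-mE_2(m\cdot)]-[E_2-kE_2(k\cdot)]$ correctly kills the quasimodular anomaly), $z$ is weight~$2$ because it is the logarithmic derivative of a weight-$0$ invariant, and the weight-$0$ quotients land in $\mathbb{C}(h)$ by the genus-zero/Hauptmodul property. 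You are also right to flag the one thing that must be checked rather than asserted: that $h$ generates the function field of the \emph{same} group under which $z$ and the numerators are invariant. Your approach explains \emph{why} the lemma holds and would transport verbatim to any other genus-zero level; the paper's approach, by contrast, hands you the explicit rational functions, which is what is actually used downstream in the modular-forms proof of Theorem~\ref{t1}.
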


\begin{proof}
The result for $P$ follows from \cite[Theorem 4.5]{cooperye}; the result for~$Q$ is
Theorem~4.6 in \cite{cooperye}; and the result for Dedekind's eta-function is Theorem~4.2 in \cite{cooperye}.
\end{proof}

Explicit formulas for the rational functions $r_{k,m}(h)$, $s_m(h)$ and $t_m(h)$ can be determined from
the information in~\cite{cooperye}. Some examples will be given below, after Lemma~\ref{7.3}.
\begin{Lemma}
\label{7.2}
For $\ell\in\{1,2,3,4\}$, let $Z_\ell$ be defined by
\begin{equation}
\label{zall4def2}
Z_\ell(q) = \begin{cases}
Q^{1/2}(q) & \text{if $\ell=1$,} \\[1.5mm]
\displaystyle{\frac{\ell P(q^\ell)-P(q)}{\ell-1}} & \text{if $\ell=2$, $3$ or $4$.}
\end{cases}
\end{equation}
Then the following parameterizations of hypergeometric functions hold:
\begin{gather*}
Z_1(q)=F_1\biggl(\biggl(\frac{\eta^4(\tau)}{Z_1(q)}\biggr)^6\biggr),
\quad
Z_2(q)=F_2\biggl(\biggl(\frac{\eta^2(\tau)\eta^2(2\tau)}{Z_2(q)}\biggr)^4\biggr),
\\
Z_3(q)=F_3\biggl(\biggl(\frac{\eta^2(\tau)\eta^2(3\tau)}{Z_3(q)}\biggr)^3\biggr)
\quad\text{and}\quad
Z_4(q)=F_4\biggl(\biggl(\frac{\eta^4(\tau)\eta^4(4\tau)}{\eta^4(2\tau)\,Z_4(q)}\biggr)^2\biggr),
\end{gather*}
where, as usual, $q=\exp(2\pi i\tau)$ and the hypergeometric functions are as in~\eqref{bhf}.
\end{Lemma}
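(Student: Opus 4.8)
The plan is to prove each of the four parameterizations by first reducing the stated ${}_3F_2$-identity to a weight-one ${}_2F_1$-identity via Clausen's formula~\eqref{clauclau}, then establishing that ${}_2F_1$-identity by the genus-zero differential-equation method, and finally matching the eta-quotient argument with the help of Lemma~\ref{7.1}. Throughout I write $z=z(q)$ and $h=h(q)$ for the quantities introduced before Lemma~\ref{7.1}, and I let $s\in\{\tfrac16,\tfrac14,\tfrac13,\tfrac12\}$ correspond to $\ell\in\{1,2,3,4\}$ as in the definition of $F_\ell$. The first observation is that each $Z_\ell$ is a perfect square of a weight-one form: $Z_1=Q^{1/2}$ by definition, while for $\ell\in\{2,3,4\}$ the combination $Z_\ell=(\ell P(q^\ell)-P(q))/(\ell-1)$ is the classical weight-two Eisenstein series on $\Gamma_0(\ell)$, which equals $z_\ell^2$ for a weight-one form $z_\ell=1+O(q)$.

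Writing $x_\ell$ for the eta-quotient argument displayed in the lemma, Clausen's identity~\eqref{clauclau} shows that the assertion $Z_\ell=F_\ell(x_\ell)$ is equivalent to the weight-one statement
\[
z_\ell=\pFq{2}{1}{s,1-s}{1}{\lambda_\ell},\qquad \lambda_\ell(1-\lambda_\ell)=C_s\,x_\ell ,
\]
where $\lambda_\ell$ is the branch with $\lambda_\ell=c_\ell q+O(q^2)$, $c_\ell\neq0$ (so that $\lambda_\ell(0)=0$). Thus the whole lemma reduces, in each signature, to producing the modular function $\lambda_\ell$ together with this weight-one Eisenstein parameterization and then checking the algebraic relation $\lambda_\ell(1-\lambda_\ell)=C_sx_\ell$.

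For the core step I would verify that $z_\ell$, viewed as a function of $\lambda_\ell$, satisfies the hypergeometric equation
\[
\frac{\d}{\d\lambda}\Bigl(\lambda(1-\lambda)\,\frac{\d w}{\d\lambda}\Bigr)=s(1-s)\,w ,
\]
the same equation recorded at the start of the proof of Theorem~\ref{t2}. Two ingredients make this work: first, $\lambda_\ell$ is a local parameter at $q=0$, so $q\mapsto\lambda_\ell$ is invertible as a power series; second, on the genus-zero modular curve the ratio of two independent weight-one forms is $\tau$, so $\tau$ as a function of $\lambda_\ell$ has a prescribed Schwarzian, and the exponent differences at the three singular points $\lambda=0,1,\infty$ are $0$, $0$ and $|1-2s|$. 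These data pin down the second-order equation to be exactly the displayed hypergeometric one. Concretely this can be carried out using Lemma~\ref{7.1}, which renders $z_\ell$ and $\lambda_\ell$ as explicit algebraic functions of $h$; the Ramanujan-type differentiation rules for $q\,\d h/\d q$ and $q\,\d z/\d q$ (derivable from $z=q\,\d\log h/\d q$ and the $P,Q$-formulas of Lemma~\ref{7.1}) then reduce the ODE to a finite identity that can be checked directly. Since $z_\ell=1+O(\lambda_\ell)$ is the unique holomorphic solution with value $1$ at $\lambda_\ell=0$, and ${}_2F_1(s,1-s;1;\lambda)$ is that solution, the weight-one parameterization follows.

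It then remains to confirm the relation $\lambda_\ell(1-\lambda_\ell)=C_sx_\ell$. Applying Lemma~\ref{7.1} once more, the eta-quotient $x_\ell$ becomes a rational function of $h$ (the factors of $z$ cancel because $x_\ell$ is a ratio of modular forms of equal weight), so this is a finite algebraic identity in $h$ that can be verified directly, or by comparing enough $q$-coefficients since both sides are modular functions of bounded degree on the level-$12$ curve. The main obstacle is the core step above: proving that $z_\ell$ satisfies \emph{precisely} the hypergeometric equation with the correct parameter $s$. Everything else is either formal or a finite verification, but this step carries the genuine content—it is the statement of Ramanujan's theory of elliptic functions to the base $2,3,4,6$—and requires either the Schwarzian/monodromy analysis on the modular curve or the explicit and somewhat delicate computation of the nonlinear differential system satisfied by $z$ and $h$.
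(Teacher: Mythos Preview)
Your outline is essentially correct, but note that the paper does not actually prove Lemma~\ref{7.2}: its ``proof'' is a pointer to \cite{bbg}, \cite{domb} and \cite{cooperlms}, together with the remark that one assembles the identities there and applies the Clausen specialisation~\eqref{clauclau}. So you are not comparing your argument with a proof in the paper, but with a citation.

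That said, your proposed route---reduce via Clausen to a weight-one ${}_2F_1$ parameterization, show the weight-one form satisfies the hypergeometric ODE with exponent data $(0,0,|1{-}2s|)$ on the genus-zero curve, then check the algebraic relation $\lambda_\ell(1-\lambda_\ell)=C_s x_\ell$---is exactly the strategy of the references the paper cites, especially \cite{bbg} and \cite{cooperlms}. Two remarks. First, routing the verification through the level-$12$ uniformizers $h,z$ of Lemma~\ref{7.1} is legitimate but heavier than necessary: the cited proofs work directly at level~$\ell$ with its own Hauptmodul, which keeps the rational functions small. Second, your phrasing ``the ratio of two independent weight-one forms is $\tau$'' is fine for $\ell\in\{2,3,4\}$ but needs adjusting for $\ell=1$, where there are no genuine weight-one modular forms on $\mathrm{SL}_2(\mathbb{Z})$; the Schwarzian argument still goes through using $Q^{1/4}$ as a formal weight-one object (equivalently, working with $Z_1$ and the ${}_3F_2$ ODE directly), but you should not appeal to a basis of weight-one forms there. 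Your own caveat that the ``core step'' carries the real content is accurate: once that ODE identification is made, the rest is indeed finite checking.
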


\begin{proof}
These results may be found in \cite{bbg}, \cite{domb} or \cite{cooperlms}, or they
can be proved by putting together identities in those references and applying the special case of Clausen's identity
given by~\eqref{clauclau}.
\end{proof}

The parameter $\ell$ in Lemma~\ref{7.2} is called the level. The next lemma gives the analogous results for
level~$6$.

\begin{Lemma}
\label{7.3}
The following parameterizations hold:
\begin{align*}
\frac{\eta^6(\tau)\eta(6\tau)}{\eta^3(2\tau)\eta^2(3\tau)}
&=f_{6a}\biggl(\frac{\eta(2\tau)\eta^5(6\tau)}{\eta^5(\tau)\eta(3\tau)}\biggr),
\\
\frac{\eta^6(2\tau)\eta(3\tau)}{\eta^3(\tau)\eta^2(6\tau)}
&=f_{6b}\biggl(\frac{\eta^4(\tau)\eta^8(6\tau)}{\eta^8(2\tau)\eta^4(3\tau)}\biggr)
\intertext{and}
\frac{\eta(2\tau)\eta^6(3\tau)}{\eta^2(\tau)\eta^3(3\tau)}
&=f_{6c}\biggl(\frac{\eta^3(\tau)\eta^9(6\tau)}{\eta^3(2\tau)\eta^9(3\tau)}\biggr).
\end{align*}
\end{Lemma}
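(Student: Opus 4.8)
The plan is to treat each of the three identities exactly as the instances of Lemma~\ref{7.2} were treated, namely via the differential-equation characterization of the generating function. Fix one case, say the one for $f_{6c}$, and write $x(q)$ for the eta-quotient that appears as the argument and $Z(q)$ for the eta-quotient on the left-hand side. A count of exponents shows that $x(q)$ has total weight~$0$ and satisfies the Ligozat congruences for $\Gamma_0(6)$, so it is a modular function on $\Gamma_0(6)$; as this group has genus~$0$ and $x(q)=q+O(q^2)$ vanishes at the cusp $\tau=i\infty$, the function $x$ is a Hauptmodul. Similarly $Z(q)$ has total weight~$1$, is holomorphic, and satisfies $Z(q)=1+O(q)$. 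Since the excerpt already records that $f_{6c}$ solves \eqref{zagde22} with $(\alpha,\beta,\gamma)=(7,2,8)$, and since \eqref{zagde22} has a unique solution holomorphic at $x=0$ with value~$1$, it will suffice to prove that $Z$, expanded through $x$ as a series $\widetilde z(x)\in\mathbb Q[[x]]$, satisfies \eqref{zagde22}. The cases $f_{6a}$ and $f_{6b}$ are identical, with $(\alpha,\beta,\gamma)=(-17,-6,-72)$ and $(10,3,-9)$.

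The computational engine is the logarithmic-derivative formula $q\,\frac{\d}{\d q}\log\eta(d\tau)=\frac{d}{24}P(q^d)$. Applying it to the eta-quotients expresses $q\,\frac{\d x}{\d q}\big/x$ and $q\,\frac{\d Z}{\d q}\big/Z$ as explicit $\mathbb Q$-linear combinations of the Eisenstein series $P(q^d)$ with $d\mid6$; the weight-$0$ normalization of $x$ makes the quasimodular anomalies cancel, so that $q\,\frac{\d x}{\d q}\big/x$ and $Z^2$ both lie in the three-dimensional space $M_2(\Gamma_0(6))$. The heart of the argument is to establish the first-order relation
$$
q\,\frac{\d x}{\d q}=x\,(1-\alpha x-\gamma x^2)\,Z^2 .
$$
Both sides are weight-$2$ meromorphic modular forms whose poles are supported on the cusps where $x$ has a pole, so after clearing denominators the identity lives in a finite-dimensional space and is confirmed by matching a bounded number of $q$-coefficients, the bound coming from the valence formula on $X_0(6)$. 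A second application of $q\,\frac{\d}{\d q}$, with the resulting quasimodular expressions reduced by the standard Ramanujan differential relations for $P$ and $Q$, then turns this first-order relation into \eqref{zagde22} with precisely the stated $(\alpha,\beta,\gamma)$; this too is a finite check, now in the weight-$4$ space on $\Gamma_0(6)$. Matching the linear coefficients, namely $Z=1+t(1)q+\cdots$ against $f_{6c}(x)=1+t(1)x+\cdots$ with $t(1)$ read off from \eqref{recw1}, fixes the normalization, and uniqueness of the holomorphic solution of \eqref{zagde22} completes the case.

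The main obstacle is producing the first-order relation with exactly the coefficient $1-\alpha x-\gamma x^2$: this is equivalent to matching the ramification of $x\colon X_0(6)\to\mathbb P^1$ over its four cusps to the singular data of \eqref{zagde22}, and it is here that one needs the explicit level-$6$ evaluations of Eisenstein series and eta-quotients, that is, the $\Gamma_0(6)$ analogue of Lemma~\ref{7.1}, which can be extracted from \cite{cooperye} or \cite{ctyz}. In practice this can be bypassed just as in the proof of Lemma~\ref{7.2}: one squares each identity to reduce it to a statement about weight-$2$ forms, matches $Z^2$ with the known weight-$2$ parameterizations of $f_{6c}(x)^2$ recorded in \cite{bbg,domb,cooperlms,ctyz} (for the level-$6$ generating functions of the sequences $t(n)$ in Table~\ref{table1}), and then takes the square root, fixing the sign by the constant term~$1$.
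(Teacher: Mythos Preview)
Your proposal is correct and constitutes a genuine proof, but it differs substantially from what the paper does. The paper's proof of Lemma~\ref{7.3} is simply a citation: it identifies $f_{6b}$, $f_{6c}$ and $f_{6a}$ with Zagier's cases C, A and F (the last after $q\mapsto -q$) in~\cite{zagier}, where the eta-parameterizations are already recorded. No argument is given beyond pointing to that reference.

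What you have written is, in effect, a self-contained reconstruction of the argument that lies behind such a citation: recognize $x$ as a Hauptmodul for $\Gamma_0(6)$ and $Z$ as a weight-$1$ form, establish the first-order relation $q\,\d x/\d q = x(1-\alpha x-\gamma x^2)Z^2$ by a Sturm-bound coefficient check in weight~$2$, differentiate once more and reduce via Ramanujan's derivative identities to recover \eqref{zagde22} in weight~$4$, and invoke uniqueness of the holomorphic solution at the regular singular point $x=0$. This is the standard machinery and it works; indeed it is close in spirit to what Zagier himself does. Two minor remarks: the first-order relation alone fixes $\alpha$ and $\gamma$ but not $\beta$, so the second differentiation is not a formal consequence but a second finite verification, as you correctly indicate; and your alternative final paragraph (squaring and citing \cite{bbg,domb,cooperlms,ctyz}) is weaker than the main argument, since those references treat the hypergeometric cases of Lemma~\ref{7.2} rather than the level-$6$ Ap\'ery-like generating functions directly. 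The comparison, then, is that the paper offloads the work to~\cite{zagier} in one sentence, while your route makes the modular-forms verification explicit and independent of that source.
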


\begin{proof}
This is explained in Zagier's work \cite{zagier}. Our functions $f_{6b}$ and $f_{6c}$ correspond to the functions~$f(z)$
in \cite{zagier} in the cases C and A, respectively. The function $f_{6a}$ corresponds to the function~$f(z)$
in \cite{zagier} in case~F but with~$-q$ in place of~$q$.
\end{proof}

We are now ready to explain how the forty-two functions in Theorem~\ref{t1} arise, and why they are equal.
We will only focus on the level 3 functions in \eqref{31}--\eqref{3m} as an illustration; the other functions can be
obtained by a similar procedure by working with the other levels.

\begin{proof}[Proof of Theorem~\textup{\ref{t1}} using modular forms.]
By Lemma~\ref{7.1} and the explicit formulas in \cite[Theorem 4.2]{cooperye} we find that
$$
\eta^6(\tau)\eta^6(3\tau) = z^3 \times \frac{h(1+h^2)(1-h+h^2)}{(1-h^2)(1-h+h^2)^2}
$$
and
$$
\frac12\bigl(3P(q^3)-P(q)\bigr)
=z \times \frac{(1+4h-6h^2+4h^3+h^4)^2}{(1+h^2)(1-h+h^2)(1-4h+h^2)(1-h^2)}.
$$
Substituting these in the result for $Z_3(q)$ in Lemma~\ref{7.2} gives
\begin{multline*}
z \times \frac{(1+4h-6h^2+4h^3+h^4)^2}{(1+h^2)(1-h+h^2)(1-4h+h^2)(1-h^2)} \\
= \pFq{3}{2}{\frac13,\frac12,\frac23}{1,1}{108\frac{h(1+h^2)^4(1-h+h^2)(1-4h+h^2)^4(1-h^2)^2}{(1+4h-6h^2+4h^3+h^4)^6}}.
\end{multline*}
Under the change of variables $p=h/(1+h^2)$, this be written in the form
\begin{multline}
\label{nearly1}
\frac{z}{(1-p)(1-4p)\sqrt{1-4p^2}}\\
=\frac{1}{(1+4p-8p^2)^2}\;
\pFq{3}{2}{\frac13,\frac12,\frac23}{1,1}
{108\,\frac {p ( 1-p )  ( 1-4p )^4 ( 1-4p^2) }
{(1+4p-8p^2)^6}}.
\end{multline}
We will now obtain three further formulas akin to \eqref{nearly1} by replacing $q$ with
$q^2$, $q^4$ or $-q$ in the identity for $Z_3(q)$ in Lemma~\ref{7.2}; that is, by replacing $\tau$ with
$2\tau$, $4\tau$ and $\tau + \frac12$, respectively. First,
replacing $\tau$ with $2\tau$
in the formula for $Z_3$ in Lemma~\ref{7.2} and
using the parameterizations
$$
\eta^6(2\tau)\eta^6(6\tau) = z^3 \times \frac{h^2(1-h^2)}{(1+h^2)(1-h+h^2)(1-4h+h^2)}
$$
and
$$
 \frac12\bigl(3P(q^6)-P(q^2)\bigr)
=z \times \frac{(1-2h+6h^2-h^3+h^4)^2}{(1+h^2)(1-h+h^2)(1-4h+h^2)(1-h^2)}
$$
leads to the identity
\begin{multline}
\label{nearly2}
\frac{z}{(1-p)(1-4p)\sqrt{1-4p^2}}\\
= \frac{1}{(1-2p+4p^2)^2}\,
\pFq{3}{2}{\frac13,\frac12,\frac23}{1,1}
{108\,\frac {p^2 (1-p)^2 (1-4p)^2 (1-4p^2)^2}{(1-2p+4p^2)^6}}.
\end{multline}
Similarly, replacing $\tau$ with $4\tau$
in the formula for $Z_3$ in Lemma~\ref{7.2} and using the parameterizations
$$
\eta^6(4\tau)\eta^6(12\tau) = z^3 \times \frac{h^4(1-h+h^2)}{(1+h^2)^2(1-h^2)(1-4h+h^2)^2}
$$
and
$$
 \frac12\bigl(3P(q^{12})-P(q^4)\bigr)
= z \times \frac{(1-2h-2h^3+h^4)^2}{(1+h^2)(1-h+h^2)(1-4h+h^2)(1-h^2)}
$$
leads to the identity
\begin{multline}
\label{nearly3}
\frac{z}{(1-p)(1-4p)\sqrt{1-4p^2}}\\
= \frac{1}{(1-2p-2p^2)^2 }\,
\pFq{3}{2}{\frac13,\frac12,\frac23}{1,1}
{108\,\frac {p^4 (1-p)^4 (1-4p) (1-4p^2)}{(1-2p-2p^2)^6}}.
\end{multline}
In order to replace $q$ with $-q$, equivalently $\tau$ with $\tau+\frac12$, use the identity
$$
\prod_{j=1}^\infty (1-(-q)^j) = \prod_{j=1}^\infty \frac{(1-q^{2j})^3}{(1-q^j)(1-q^{4j})}
$$
to get
\begin{equation}
\label{minus1}
\eta^6(\tau)\eta^6(3\tau)\bigg|_{\tau \mapsto \tau+\frac12}
=\frac{-\eta^{18}(2\tau)\eta^{18}(2\tau)}{\eta^6(\tau)\eta^6(3\tau)\eta^6(4\tau)\eta^6(12\tau)},
\end{equation}
and use the identity
\begin{equation}
\label{pminus}
P(-q) = -P(q)+6P(q^2)-4P(q^4)
\end{equation}
to deduce that
\begin{multline}
\label{minus2}
\lefteqn{ \frac12\bigl(3P(-q^3)-P(-q)\bigr)} \\
 = -\frac12\bigl(3P(q^3)-P(q)\bigr) + 3\bigl(3P(q^6)-P(q^2)\bigr)-2\bigl(3P(q^{12})-P(q^4)\bigr).
 \end{multline}
Then, \eqref{minus1} and \eqref{minus2} can be used along with the parameterizations given above, to
replace $q$ with $-q$ in the formula for $Z_3$ in Lemma~\ref{7.2} and produce the identity
\begin{multline}
\label{nearly4}
\frac{z}{(1-p)(1-4p)\sqrt{1-4p^2}}\\
=  \frac{1}{(1-8p+4p^2)^2 }\,
\pFq{3}{2}{\frac13,\frac12,\frac23}{1,1}
{-108\,\frac{p (1-p) (1-4p) (1-4p^2)^4}{(1-8p+4p^2)^6}}.
\end{multline}
Finally, equating \eqref{nearly1}, \eqref{nearly2}, \eqref{nearly3} and \eqref{nearly4}
shows the equality of \eqref{31}--\eqref{3m} in Theorem~\ref{t1}.

The identities \eqref{11}--\eqref{1m3}, \eqref{21}--\eqref{2m3} and \eqref{41}--\eqref{4m6} can be
obtained in the same way by using the results for $Z_1$, $Z_2$ and $Z_4$, respectively, in Lemma~\ref{7.2}.
The identities \eqref{Faa1}--\eqref{cc3} can be obtained using the parameterizations in Lemma~\ref{7.3}
together with the identity~\eqref{zs}.
\end{proof}

\begin{Corollary}
Suppose that $y$ is the solution of the differential equation
$$
\frac{\d}{\d p} \biggl(p(1-p)(1-4p)(1-2p)(1+2p) \frac{\d y}{\d p}\biggr)
=2(1-4p)(1+4p-8p^2)y
$$
that satisfies the initial conditions $y(0)=1$, $y'(0)=2$, and suppose
$$
y=\sum_{n=0}^\infty t(n)p^n
$$
in a neighborhood of $p=0$. Then
\begin{multline*}
(n+1)^2 t(n+1) \\
= (5n^2+5n+2)t(n) -4(5n^2-5n+2)t(n-2) + 16(n-1)^2 t(n-3),
\end{multline*}
the coefficient of the term $t(n-1)$ being zero.
Furthermore, $y$ and $p$ may be parameterized by the modular forms
$$
y = \frac{\eta(2\tau)\eta^6(3\tau)}{\eta^2(\tau)\eta^3(6\tau)}
\quad\text{and}\quad
p = \frac{\eta(\tau)\eta^3(12\tau)}{\eta^3(3\tau)\eta(4\tau)}.
$$
\end{Corollary}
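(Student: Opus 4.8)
The plan is to treat the two assertions---the recurrence for the coefficients $t(n)$ and the eta-quotient parameterization---separately, since they rest on different ingredients. The recurrence is a purely formal consequence of the differential equation, while the parameterization will be deduced by recognizing $y$ as a function that already appears in Theorem~\ref{t2}. Observe first that the displayed differential equation is exactly \eqref{dzdp}; hence, by the proof of Theorem~\ref{t2}, the solution determined by $y(0)=1$ and $y'(0)=2$ coincides with the common value of the twenty-three functions \eqref{2F21}--\eqref{c3}. In particular $y=f_{6c}\bigl(p(1-2p)\bigr)$ by \eqref{c1}, and the value $y'(0)=2$ is nothing but $t(1)$, the linear coefficient of $f_{6c}$.

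For the recurrence I would expand the polynomial coefficients of \eqref{dzdp}, namely
$$
A(p):=p(1-p)(1-4p)(1-2p)(1+2p)=p-5p^2+20p^4-16p^5
$$
and $2(1-4p)(1+4p-8p^2)=2-48p^2+64p^3$, rewrite the equation as $A(p)y''+A'(p)y'-(2-48p^2+64p^3)y=0$, substitute $y=\sum_n t(n)p^n$, and read off the coefficient of $p^n$. Collecting terms yields the four-term relation
$$
(n+1)^2t(n+1)=(5n^2+5n+2)t(n)-4(5n^2-5n+2)t(n-2)+16(n-1)^2t(n-3).
$$
The absence of a $t(n-1)$ term is not an accident: such a contribution could only come from the $p^3$ coefficient of $A$, the $p^2$ coefficient of $A'$, or the $p^1$ coefficient of $2-48p^2+64p^3$, and all three of these vanish. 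I expect this part to be entirely mechanical.

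For the parameterization I would invoke the $f_{6c}$ identity of Lemma~\ref{7.3}. Since $y=f_{6c}\bigl(p(1-2p)\bigr)$, it suffices to verify the two eta-quotient identities
$$
p=\frac{\eta(\tau)\eta^3(12\tau)}{\eta^3(3\tau)\eta(4\tau)}
\quad\text{and}\quad
p(1-2p)=\frac{\eta^3(\tau)\eta^9(6\tau)}{\eta^3(2\tau)\eta^9(3\tau)};
$$
substituting the second into Lemma~\ref{7.3} then gives $y=\frac{\eta(2\tau)\eta^6(3\tau)}{\eta^2(\tau)\eta^3(6\tau)}$ at once. To anchor the first identity I would use the relation $p=h/(1+h^2)$ recorded in Section~\ref{sec7} together with the explicit level-$12$ eta formulas of \cite{cooperye} to express $h$, and hence $p$, as an eta-quotient; the second identity then reduces to an elementary manipulation of the same quotients.

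The main obstacle is this last verification. Both displayed eta-quotients have weight $0$, so each identity asserts the equality of two modular functions on $\Gamma_0(12)$, and the clean way to finish is the valence formula: each side is holomorphic and nonvanishing away from the cusps, so once the orders at all cusps are determined one obtains an explicit bound, and agreement of the $q$-expansions up to that order forces the identities (the leading behaviour $p=q+O(q^2)$ is a useful consistency check, since $h=q+O(q^2)$). Carrying out the cusp bookkeeping---computing the orders at every cusp of $\Gamma_0(12)$ and confirming that no interior zeros or poles occur---is the fiddly step; the power-series work behind the recurrence, by contrast, is immediate.
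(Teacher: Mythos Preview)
Your proposal is correct, and your treatment of the recurrence relation is exactly what the paper does (it calls this ``routine'' and omits the details). For the modular parameterization, however, you take a different route from the paper.

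The paper works one level up: it introduces the weight-two form
\[
Y=\frac{z}{(1-p)(1-4p)\sqrt{1-4p^2}},
\]
shows (via the level-$3$ computations \eqref{nearly1}--\eqref{nearly4}) that $Y$ is the common value of all forty-two functions in Theorem~\ref{t1}, and then evaluates $Y$ directly as an eta-quotient by writing it as $z$ times a rational function of $h$ and invoking the explicit eta formulas for $z$ and for the polynomials in $h$ from \cite{cooperye}. Since $Y=y^2$ by Clausen's identity, the eta-quotient for $y$ follows by taking a square root. The formula for $p$ is obtained in the same stroke from $p=h/(1+h^2)$ and the \cite{cooperye} tables.

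Your route stays at weight one: you identify $y$ with $f_{6c}\bigl(p(1-2p)\bigr)$ via \eqref{c1} and then match against Lemma~\ref{7.3}. This is conceptually cleaner---Lemma~\ref{7.3} hands you the answer for $y$ immediately---but it imposes the extra burden of verifying the auxiliary eta identity $p(1-2p)=\eta^3(\tau)\eta^9(6\tau)/\bigl(\eta^3(2\tau)\eta^9(3\tau)\bigr)$. You propose to do this by the valence formula with cusp bookkeeping, which works; but note that it can also be read off mechanically from the \cite{cooperye} tables once you express $1-2p=(1-h)^2/(1+h^2)$ in terms of $h$, avoiding the cusp analysis entirely. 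Either way, the paper's approach trades your extra eta identity for the computation of $Y$ as a rational function of $h$, so the bookkeeping is comparable; the paper's version has the mild advantage that everything is funnelled through a single reference to \cite{cooperye} rather than through Lemma~\ref{7.3} plus \cite{cooperye}.
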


\begin{proof}
The recurrence relation for the coefficients can be deduced immediately by substituting the series
expansion into the differential equations. This is a routine procedure so we omit the details.

Next, let
$$
Y=\frac{z}{(1-p)(1-4p)\sqrt{1-4p^2}}.
$$
By the `proof of Theorem~\ref{t1} using modular forms' detailed above, and
especially \eqref{nearly1}, \eqref{nearly2}, \eqref{nearly3} and \eqref{nearly4},
all forty-two functions in Theorem~\ref{t1} are different expressions for $Y$. By the
change of variable $p=h/(1+h^2)$ and the formulas in~\cite{cooperye}, we have
$$
Y = z \times \frac{(1+h^2)^3}{(1-h^2)(1-h+h^2)(1-4h+h^2)} = \frac{\eta^2(2\tau)\eta^{12}(3\tau)}{\eta^4(\tau)\eta^6(6\tau)}.
$$
By Clausen's formula, the functions $y$ in Theorem~\ref{t2} are related to the functions $Y$ in
Theorem~\ref{t1} by $Y=y^2$, and it follows that
$$
y = \frac{\eta(2\tau)\eta^6(3\tau)}{\eta^2(\tau)\eta^3(6\tau)}.
$$
Moreover, by the formulas in~\cite{cooperye} we have
$$
p = \frac{h}{1+h^2} = \frac{\eta(\tau)\eta^3(12\tau)}{\eta^3(3\tau)\eta(4\tau)}.
$$
Finally, $y$ satisfies the required differential equation with respect to $p$, by~\eqref{dzdp}.
\end{proof}

Theorem~\ref{t3} can be proved in a similar way using the level $10$ function
$$
k=q\prod_{j=1}^\infty \frac{(1-q^{10j-9})(1-q^{10j-8})(1-q^{10j-2})(1-q^{10j-1})}
{(1-q^{10j-7})(1-q^{10j-6})(1-q^{10j-4})(1-q^{10j-3})}
$$
and letting
\begin{equation}
\label{pkk}
p=\frac{k}{1-k^2},
\end{equation}
and then using
the properties developed in \cite{cooper10} and \cite{cooper10a}, along with the results for level 5 modular forms
that are summarized in~\cite{cc}. We omit most of the details as they are similar to the proof of Theorem~\ref{t1} given
above. It is worth recording the modular parameterization.

\begin{Theorem}
Let the common power series of each expression in \eqref{511}--\eqref{10aa1} be denoted by
$$
\sum_{n=0}^\infty b(n)p^n.
$$
Then
$$
\frac{\eta(2\tau)\eta^{10}(5\tau)}{\eta^2(\tau)\eta^5(10\tau)}
=\sum_{n=0}^\infty b(n) \biggl(\frac{\eta(\tau)\eta^5(10\tau)}{\eta(2\tau)\eta^5(5\tau)}\biggr)^n.
$$
\end{Theorem}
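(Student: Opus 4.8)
The plan is to mimic the Corollary to Theorem~\ref{t1}, transferring every step from level~$12$ to level~$10$. By the proof of Theorem~\ref{t3} sketched above, the thirteen expressions in \eqref{511}--\eqref{10aa1} are thirteen different presentations of one and the same modular quantity, which I will call $Y$; tautologically $Y=\sum_{n=0}^\infty b(n)p^n$ once the common uniformizer $p$ is fixed by \eqref{pkk}. The goal is therefore to identify the two modular objects $p$ and $Y$ explicitly as eta-quotients, namely to show that
$$
p=\frac{k}{1-k^2}=\frac{\eta(\tau)\eta^5(10\tau)}{\eta(2\tau)\eta^5(5\tau)}
\quad\text{and}\quad
Y=\frac{\eta(2\tau)\eta^{10}(5\tau)}{\eta^2(\tau)\eta^5(10\tau)}.
$$
Granting these two identities, the theorem is immediate: substituting the first eta-quotient for $p$ into $Y=\sum_n b(n)p^n$ produces exactly the asserted formula.

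First I would establish the eta-quotient for $p$. Starting from the infinite product defining the level~$10$ function $k$, one rewrites $k/(1-k^2)$ as an eta-quotient by the standard manipulation of the products $\prod_j(1-q^{j})$; the weight balance $\tfrac12(1+5-1-5)=0$ confirms that the right-hand side is a modular function, as $p$ must be, and a comparison of the leading $q$-expansions together with the valence formula pins down the identity. This is exactly the role played by $p=h/(1+h^2)=\eta(\tau)\eta^3(12\tau)/(\eta^3(3\tau)\eta(4\tau))$ in the level~$12$ corollary, and the computation here is of the same routine character, drawing on the explicit formulas for level~$10$ eta-quotients in \cite{cooper10,cooper10a}.

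Next I would identify $Y$. As in the passage leading to \eqref{nearly1}--\eqref{nearly4}, each of the thirteen expressions was obtained by substituting the level~$10$ analogues of Lemma~\ref{7.1} (the parameterizations of $kP(q^k)-mP(q^m)$, of $Q(q^m)$, and of the relevant eta-powers in terms of a weight-$2$ object $z$ and the Hauptmodul $k$, taken from \cite{cooper10,cooper10a,cc}) into the hypergeometric parameterizations of $Z_\ell$ for $\ell\in\{1,2,4\}$, into the level~$5$ parameterizations of $F_5$ and $G_5$, and into the level~$10$ parameterization of $H$. Each substitution yields the same quantity $z$ times a rational function of $k$; setting $p=k/(1-k^2)$ makes these rational functions coincide, and the common value collapses, via the eta-quotient formulas of \cite{cooper10,cooper10a}, to the single eta-quotient $\eta(2\tau)\eta^{10}(5\tau)/(\eta^2(\tau)\eta^5(10\tau))$. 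The weight check $\tfrac12(1+10-2-5)=2$ is consistent with $z$ having weight~$2$ in this normalization.

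The main obstacle is this last evaluation: showing that the rational-in-$k$ factor accompanying $z$ simplifies to a clean eta-quotient, and that this eta-quotient is precisely the stated one. This is where the level-$10$ parameterizations of \cite{cooper10,cooper10a} and the level-$5$ data of \cite{cc} do the heavy lifting, and it mirrors, but is more intricate than, the level-$3$ identity $Y=z\,(1+h^2)^3/\big((1-h^2)(1-h+h^2)(1-4h+h^2)\big)=\eta^2(2\tau)\eta^{12}(3\tau)/(\eta^4(\tau)\eta^6(6\tau))$ used in the Corollary. Once both eta-quotients are in hand no further work is needed; matching enough terms of the $q$-expansions of each side against the valence formula rigorously confirms the two eta-quotient evaluations, and hence the theorem.
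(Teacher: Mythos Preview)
Your proposal is correct and follows essentially the same modular-forms route as the paper: identify $p$ as the eta-quotient via \cite[Theorem~3.5]{cooper10}, then identify the common value $Y$ as the stated eta-quotient using the level-$10$ parameterizations. The paper's execution differs only tactically in that, rather than channeling everything through a weight-$2$ form $z$ as in the level-$12$ Corollary, it works with the single expression \eqref{521} and uses the $Z_2$ formula of Lemma~\ref{7.2} to cancel the hypergeometric factor directly, arriving at $\bigl(p(1+p)^5(1-4p)^5\bigr)^{-1/4}\eta^2(\tau)\eta^2(2\tau)$ before converting via $p=k/(1-k^2)$ and \cite[Theorem~3.5]{cooper10}.
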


\begin{proof}
By \eqref{pkk} and \cite[Theorem 3.5]{cooper10} we have
\begin{equation}
\label{pk2}
p=\frac{k}{1-k^2} = \frac{\eta(\tau)\eta^5(10\tau)}{\eta(2\tau)\eta^5(5\tau)}.
\end{equation}
Next, starting with \eqref{521} and using the formula for $Z_2$ in Lemma~\ref{7.2}, we get
\begin{align*}
\sum_{n=0}^\infty b(n)p^n
&= \frac{1}{(1+4p^2) ^{1/2} (1+22p-4p^2)} \\
& \qquad \times \pFq{3}{2}{\frac14,\frac12,\frac34}{1,1}
{256\,\frac{p (1+p)^5 (1-4p)^5}{(1+4p^2)^2 (1+22p-4p^2)^4}}
\displaybreak[2]\\
&= \biggl(\frac{1}{p(1+p)^5(1-4p)^5}\biggr)^{1/4} \times \biggl(\frac{\eta^2(\tau)\eta^2(2\tau)}{Z_2(q)}\biggr)\times Z_2(q)
\\
&= \biggl(\frac{1}{p(1+p)^5(1-4p)^5}\biggr)^{1/4} \times \eta^2(\tau)\eta^2(2\tau).
\end{align*}
Now use \eqref{pkk} to write $p$ in terms of $k$, and then use \cite[Theorem 3.5]{cooper10} to express
the resulting rational function of $k$ in terms of eta-functions, to get
\begin{equation}
\label{pk3}
\sum_{n=0}^\infty b(n)p^n  = \frac{\eta^{10}(5\tau)}{\eta^4(\tau)\eta(2\tau)\eta^5(10\tau)} \times \eta^2(\tau)\eta^2(2\tau)
= \frac{\eta(2\tau)\eta^{10}(5\tau)}{\eta^2(\tau)\eta^5(10\tau)}.
\end{equation}
The proof may be completed by substituting the result of \eqref{pk2} into \eqref{pk3}.
\end{proof}

\section{Special cases}
\label{sec8}

Many of the transformation formulas in Theorems~\ref{t1}, \ref{t2} and~\ref{t3} can be simplified, sometimes
significantly, by changing variables. We give several examples.

\begin{Example}
This example is from a paper by N.D.~Baruah and B.C.~Berndt \cite{baruah} and
the book by J.M.~Borwein and P.B.~Borwein \cite[pp. 180--181]{agm}.
Let $X=4x(1-x)$. Then
\begin{align*}
\pFq{3}{2}{\frac12,\frac12,\frac12}{1,1}{X}
&= \frac{1}{1-x}\, \pFq{3}{2}{\frac12,\frac12,\frac12}{1,1}{\frac{-4x}{(1-x)^2}}
\\
&= \frac{1}{\sqrt{1-x}}\,\pFq{3}{2}{\frac12,\frac12,\frac12}{1,1}{\frac{-x^2}{4(1-x)}}
\displaybreak[2]\\
&=\frac{1}{1+x}\, \pFq{3}{2}{\frac14,\frac12,\frac34}{1,1}{\frac{16x(1-x)^2}{(1+x)^4}}
\displaybreak[2]\\
&=\frac{1}{1-2x}\, \pFq{3}{2}{\frac14,\frac12,\frac34}{1,1}{\frac{-16x(1-x)}{(1-2x)^4}}
\displaybreak[2]\\
&= \frac{2}{\sqrt{4-X}}\,\pFq{3}{2}{\frac16,\frac12,\frac56}{1,1}{\frac{27X^2}{(4-X)^3}}
\\
&= \frac{1}{\sqrt{1-4X}}\,\pFq{3}{2}{\frac16,\frac12,\frac56}{1,1}{\frac{-27X}{(1-4X)^3}}.
\end{align*}
\end{Example}

\begin{proof}
Let
$$
x=\frac {16p (1-p)^3}{(1-2p) (1+2p)^3}
$$
in each of \eqref{41}, \eqref{4m1}, \eqref{4m2}, \eqref{21}, \eqref{2m1}, \eqref{12} and \eqref{1m1},
respectively.
\end{proof}

\begin{Example}
This example was considered by J.~Guillera and W.~Zudilin \cite[Eq.~(20)]{translation}:
\begin{align*}
\pFq{3}{2}{\frac13,\frac12,\frac23}{1,1}{4x(1-x)}
&= \frac{1}{\sqrt{1+8x}}\,
\pFq{3}{2}{\frac16,\frac12,\frac56}{1,1}{\frac{64x(1-x)^3}{(1+8x)^3}} \\
&= \frac{3}{\sqrt{9-8x}}\,
\pFq{3}{2}{\frac16,\frac12,\frac56}{1,1}{\frac{64x^3(1-x)}{(9-8x)^3}}.
\end{align*}
\end{Example}

\begin{proof}
Let
$$
x=\frac{27p^2 ( 1-2p )^2 }{( 1-2p+4p^2)^3}
$$
in~\eqref{32}, \eqref{12} and \eqref{16}, respectively.
\end{proof}

\begin{Example}
The identities in this example are from the unorganized pages of Ramanujan's second notebook~\cite[p.~258]{notebooks}:
$$
\sqrt{1+2x}\, \pFq{2}{1}{\frac13,\frac23}{1}{\frac{27x^2(1+x)^2}{4(1+x+x^2)^3}}
= (1+x+x^2)\,\pFq{2}{1}{\frac12,\frac12}{1}{\frac{x^3(2+x)}{1+2x}}
$$
and
\begin{multline*}
(2+2x-x^2)\,\pFq{2}{1}{\frac13,\frac23}{1}{\frac{27x(1+x)^4}{2(1+4x+x^2)^3}}
\\
= 2(1+4x+x^2)\, \pFq{2}{1}{\frac13,\frac23}{1}{\frac{27x^4(1+x)}{2(2+2x-x^2)^3}}.
\end{multline*}
Proofs, by a different method, have been given by Berndt et al.~\cite[Theorems 5.6 and~6.4]{bbg}.
\end{Example}

\begin{proof}
Take $p=-x/2$ in~\eqref{2F32} and~\eqref{2F43} to obtain
\begin{multline*}
\frac{1}{(1+x+x^2)}\,\pFq{2}{1}{\frac13,\frac23}{1}{\frac{27x^2 ( 1+x )^2 }{4( 1+x+x^2)^3}}
\\
=\frac {1}{ ( 1+x ) ^{3/2} ( 1-x )^{1/2} }\,\pFq{2}{1}{\frac12,\frac12}{1}{\frac{-x^3(2+x)}{(1+x)^3(1-x)}}.
\end{multline*}
Now apply Pfaff's transformation \cite[Theorem 2.2.5]{aar}
$$
\pFq{2}{1}{a,b}{c}{z} = (1-z)^{-a}\,\pFq{2}{1}{a,c-b}{c}{\frac{-z}{1-z}}
$$
to the right-hand side to obtain the first identity.

The second identity is obtained simply by putting $p=-x/2$ in~\eqref{2F34} and~\eqref{2F3m}.
\end{proof}

\begin{Example}
This example was studied by M.~Rogers \cite[Theorem 3.1]{rogers} and by
H.H.~Chan and W.~Zudilin \cite[Theorems 3.2 and 4.2]{mathematika}:
\begin{align*}
G_{6b}(x)
&= \frac{1}{1+16x}\, \pFq{3}{2}{\frac13,\frac12,\frac23}{1,1}{\frac{108x}{(1+16x)^3}}   \\
&= \frac{1}{1+4x}\, \pFq{3}{2}{\frac13,\frac12,\frac23}{1,1}{\frac{108x^2}{(1+4x)^3}}
\intertext{and}
G_{6c}(x)
&= \frac{1}{1+27x}\, \pFq{3}{2}{\frac14,\frac12,\frac34}{1,1}{\frac{256x}{(1+27x)^4}}  \\
&= \frac{1}{1+3x}\, \pFq{3}{2}{\frac14,\frac12,\frac34}{1,1}{\frac{256x^3}{(1+3x)^4}}.
\end{align*}
\end{Example}

\begin{proof}
The first group of identities is obtained by taking
$$
x=\frac{p(1-p)(1+2p)(1-2p)}{(1-4p)^2}
$$
in \eqref{31}, \eqref{32} and \eqref{bb1}.
The second group of identities may be proved by taking
$$
x=\frac{p(1-2p)}{(1-p)(1+2p)(1-4p)^2}
$$
in \eqref{21}, \eqref{23} and \eqref{cc1}.
\end{proof}

\begin{Example}
\label{cz6}
This example shows that the result of H.H.~Chan and W.~Zudilin \cite[Theorem 2.2]{mathematika} is
subsumed by Theorem~\ref{t1}:
\begin{align*}
\frac{1}{1+x}\,G_{6a}\biggl(\frac{x(1-8x)}{1+x}\biggr)
&=\frac{1}{1-8x}\,G_{6b}\biggl(\frac{x(1+x)}{1-8x}\biggr),
\\
\frac{1}{1-x}\,G_{6a}\biggl(\frac{x(1-9x)}{1-x}\biggr)
&=\frac{1}{1-9x}\,G_{6c}\biggl(\frac{x(1-x)}{1-9x}\biggr)
\\ \intertext{and}
\frac{1}{1+8x}\,G_{6b}\biggl(\frac{x(1+9x)}{1+8x}\biggr)
&=\frac{1}{1+9x}\,G_{6c}\biggl(\frac{x(1+8x)}{1+9x}\biggr).
\end{align*}
\end{Example}

\begin{proof}
The three identities may be obtained by setting
$$
x=p(1-2p), \quad x=\frac{p(1-2p)}{(1+2p)(1-p)}\quad\text{and}\quad x=\frac{p(1-2p)}{(1-4p)^2}
$$
respectively, in \eqref{aa1}, \eqref{bb1} and \eqref{cc1}.
\end{proof}

Alternative proofs of the identities in Example~\ref{cz6} may be given by taking
$$
x=\frac{p^2}{1-2p},\quad x=\frac{p^2}{(1-p)^2}\quad\text{and}\quad x=\frac{p^2}{1-2p-8p^2}
$$
in \eqref{aa2}, \eqref{bb2} and \eqref{cc2}, or by taking
$$
x=\frac{-p}{(1-2p)^2},\quad x=\frac{-p}{(1-p)(1-4p)} \quad\text{and}\quad x=\frac{-p}{(1+2p)^2}
$$
in \eqref{aa3}, \eqref{bb3} and \eqref{cc3}.

\begin{Example}
\label{6weight1}
Part of this example was mentioned in the identity \eqref{pb} as part of the introduction:
\begin{align*}
f_{6a}(x)
&=\frac{1}{1+9x}\,f_{6b}\biggl(\frac{x}{1+9x}\biggr)
=\frac{1}{1+8x}\,f_{6c}\biggl(\frac{x}{1+8x}\biggr),
\\
f_{6b}(x)
&=\frac{1}{1-9x}\,f_{6a}\biggl(\frac{x}{1-9x}\biggr)
=\frac{1}{1-x}\,f_{6c}\biggl(\frac{x}{1-x}\biggr),
\\
f_{6c}(x)
&=\frac{1}{1-8x}\,f_{6a}\biggl(\frac{x}{1-8x}\biggr)
=\frac{1}{1+x}\,f_{6b}\biggl(\frac{x}{1+x}\biggr).
\end{align*}
\end{Example}

\begin{proof}
Each of the three sets of identities can be proved by taking
$$
x=\frac{p(1-2p)}{(1-4p)^2},\quad
x=\frac{p(1-2p)}{(1-p)(1+2p)}\quad\text{and}\quad
x=p(1-2p),
$$
respectively, in \eqref{a1}, \eqref{b1} and \eqref{c1}.
\end{proof}

\begin{Example}
Here we give representations for the
functions in the previous example in terms of hypergeometric functions:
\begin{align*}
f_{6b}(x)
&=\frac{1}{\sqrt{1+18x-27x^2}}\,\pFq{2}{1}{\frac14,\frac34}{1}{\frac{64x}{(1+18x-27x^2)^2}}
\\
&=\frac{1}{\sqrt{1-6x-3x^2}}\,\pFq{2}{1}{\frac14,\frac34}{1}{\frac{64x^3}{(1-6x-3x^2)^2}}
\intertext{and}
f_{6c}(x)
&=\frac{1}{1+4x}\,\pFq{2}{1}{\frac13,\frac23}{1}{\frac{27x}{(1+4x)^3}}
\\
&=\frac{1}{1-2x}\,\pFq{2}{1}{\frac13,\frac23}{1}{\frac{27x^2}{(1-2x)^3}}.
\end{align*}
\end{Example}

\begin{proof}
The first set of identities may be proved by taking
$$
x=\frac{p(1-2p)}{(1-p)(1+2p)}
$$
in \eqref{2F21}, \eqref{2F23} and \eqref{b1}.
To prove the second set of identities, take $x=\nobreak p(1-2p)$
in \eqref{2F31}, \eqref{2F32} and \eqref{c1}.
\end{proof}

Just as for Example \ref{cz6}, the identities in Example \ref{6weight1} can be given alternative proofs
using \eqref{a2}, \eqref{b2} and \eqref{c2}, or by using \eqref{a3}, \eqref{b3} and \eqref{c3}.

\section{Applications}
\label{sec9}

In this section, we will show how the transformation formulas in Theorem~\ref{t1} can be used to establish
the equivalence of several of Ramanujan's series for $1/\pi$.
In the remainder of this section, we will use the binomial representation~\eqref{bhf}
of the related hypergeometric functions, so that the resulting formulas will be consistent with
the data in~\cite[Tables 3--6]{cc}.

\begin{Theorem}
\label{aycock1}
The following series identities are equivalent in the sense that any one can be obtained from the others by
using the transformation formulas in Theorem~\textup{\ref{t1}}:
\begin{equation}
\label{pi1}
\sum_{n=0}^\infty \binom{6n}{3n}\binom{3n}{n}\binom{2n}{n}\biggl(n+\frac{3}{28}\biggr) \biggl(\frac{1}{20}\biggr)^{3n}
= \frac{5\sqrt{5}}{28} \times  \frac{1}{\pi},
\end{equation}
\begin{equation}
\label{pi2}
\sum_{n=0}^\infty \binom{4n}{2n}\binom{2n}{n}^2\biggl(n+\frac{3}{40}\biggr) \biggl(\frac{1}{28}\biggr)^{4n}
=  \frac{49\,\sqrt{3}}{360} \times \frac{1}{\pi}
\end{equation}
and
\begin{equation}
\label{pi3}
\sum_{n=0}^\infty \binom{2n}{n}^3\biggl(n+\frac{1}{4}\biggr) \biggl(\frac{-1}{64}\biggr)^{n}
= \frac{1}{2\pi}.
\end{equation}
\end{Theorem}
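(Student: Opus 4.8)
The plan is to put each of \eqref{pi1}, \eqref{pi2} and \eqref{pi3} into the operator form recalled in Section~\ref{sec1} and then transport the operator between levels using Theorem~\ref{t1} and its first derivative, following Aycock's idea of differentiating transformations of the type \eqref{generaltransformation}. First I would read off the level, shift and evaluation point of each series from the binomial representations~\eqref{bhf}. Since $\binom{3n}{n}=\binom{3n}{2n}$, the three coefficient patterns $\binom{6n}{3n}\binom{3n}{n}\binom{2n}{n}$, $\binom{4n}{2n}\binom{2n}{n}^2$ and $\binom{2n}{n}^3$ are exactly the coefficients of $F_1$, $F_2$ and $F_4$, and writing $(1/20)^{3n}=(1/8000)^n$ and $(1/28)^{4n}=(28^{-4})^n$ the series become
\[
\Bigl(\tfrac{3}{28}+x\tfrac{\d}{\d x}\Bigr)F_1(x)\Big|_{x=1/8000}=\frac{5\sqrt5}{28\pi},\qquad
\Bigl(\tfrac{3}{40}+x\tfrac{\d}{\d x}\Bigr)F_2(x)\Big|_{x=28^{-4}}=\frac{49\sqrt3}{360\pi},
\]
\[
\Bigl(\tfrac14+x\tfrac{\d}{\d x}\Bigr)F_4(x)\Big|_{x=-1/64}=\frac{1}{2\pi}.
\]

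Next I would locate a single algebraic value $p=p_0$ for which the three relevant lines of Theorem~\ref{t1} hit these points simultaneously; this is forced, because Theorem~\ref{t1} relates the levels only at a common~$p$. Concretely, solving the Level~$4$ equation coming from \eqref{4m1}, namely $-p(1-2p)(1+2p)^3(1-p)^3/(1-4p)^6=-1/64$ for the branch with $p\to0$, pins down $p_0$, and I would then identify the unique Level~$1$ line (among \eqref{11}--\eqref{1m3}) and Level~$2$ line (among \eqref{21}--\eqref{2m3}) whose arguments equal $1/8000$ and $28^{-4}$ at this same $p_0$. Writing $\Phi(p)$ for the common value of the forty-two expressions of Theorem~\ref{t1}, each relevant line reads $F_\ell(X_\ell(p))=B_\ell(p)\,\Phi(p)$, where $1/B_\ell$ is its prefactor and $X_\ell$ its argument. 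Denoting $\d/\d p$ by a prime, the chain rule gives
\[
\Bigl(\alpha_\ell+x\tfrac{\d}{\d x}\Bigr)F_\ell(x)\Big|_{x=X_\ell(p)}
=\Bigl(\alpha_\ell B_\ell+\frac{X_\ell B_\ell'}{X_\ell'}\Bigr)\Phi(p)+\frac{X_\ell B_\ell}{X_\ell'}\,\Phi'(p),
\]
so that, at $p=p_0$, each Ramanujan series is a definite linear form $u_\ell\,\Phi(p_0)+v_\ell\,\Phi'(p_0)$ in the two transcendental quantities $\Phi(p_0)$, $\Phi'(p_0)$, with $u_\ell,v_\ell$ explicit algebraic functions of $p_0$.

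The asserted equivalence then reduces to the purely algebraic statement that the three coefficient vectors $(u_1,v_1)$, $(u_2,v_2)$, $(u_4,v_4)$ are pairwise proportional at $p=p_0$, i.e.\ $u_\ell v_m=u_m v_\ell$ for each pair, with common ratio $u_m/u_\ell=v_m/v_\ell$ matching the ratio of the recorded scalars $\tfrac{5\sqrt5}{28}$, $\tfrac{49\sqrt3}{360}$, $\tfrac12$. Given this proportionality, any one of the three identities determines the value $u\,\Phi(p_0)+v\,\Phi'(p_0)$ and hence the other two by multiplication by an algebraic number, which is exactly the sense of ``equivalent via Theorem~\ref{t1}''; no transcendence input beyond the single assumed evaluation is used. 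I expect the proportionality check to be the main obstacle, since for generic shifts the vectors are \emph{not} parallel: the equalities hold precisely because the shifts $\tfrac{3}{28}$, $\tfrac{3}{40}$, $\tfrac14$ are the tuned Ramanujan values. In practice I would verify it by substituting the explicit $X_\ell,B_\ell$ from \eqref{4m1} and the chosen Level~$1$ and Level~$2$ lines into the display above, differentiating the arguments and prefactors once, evaluating at the algebraic number $p_0$, and confirming both cross-ratios together with the scalar matches $\tfrac{v_1}{v_4}=\tfrac{u_1}{u_4}=\tfrac{5\sqrt5/28}{1/2}$ and $\tfrac{v_2}{v_4}=\tfrac{u_2}{u_4}=\tfrac{49\sqrt3/360}{1/2}$; the simultaneous agreement of the two ratios in each pair is the crux, the remaining simplification of the radicals in $p_0$ being routine.
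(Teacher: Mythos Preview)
Your plan is correct and is essentially the paper's own argument in slightly different dress. The paper packages the differentiation step as a one-line lemma (apply $x\,\d/\d x$ to a transformation $\sum a(n)x^n=r\sum b(n)y^n$ and add $\lambda$ times the undifferentiated identity), which carries $\sum a(n)(n+\lambda)x^n$ directly to a series of the form $\sum b(n)(nc_1+c_2)y^n$; your proportionality test $u_\ell v_m=u_m v_\ell$ is exactly the statement that $c_2/c_1$ equals the second Ramanujan shift, and the common ratio is the multiplicative constant relating the two $1/\pi$-values. Where the paper is more explicit is in naming the relevant lines \eqref{12}, \eqref{23} and \eqref{4m1} in advance rather than by search---selected because the three series carry modular data $(\ell,N,q)=(1,2,q_0^2)$, $(2,9,q_0^3)$, $(4,2,-q_0)$ with $q_0=e^{-\pi\sqrt2}$, so that a quadratic, a cubic and a sign-changing line are called for---and in supplying the closed form $p_0=\tfrac14(1+3\sqrt2-3\sqrt3)$ at which all three arguments simultaneously take the stated values.
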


In order to prove Theorem~\ref{aycock1}, it will be convenient to make use of the following simple lemma.

\begin{Lemma}
\label{translate}
Let $x$, $y$ and $r$ be analytic functions of a complex variable~$p$, and suppose that
$x(0)=y(0)=0$ and $r(0)=1$. Suppose that a transformation formula of the form
\begin{equation}
\label{h1}
\sum_{n=0}^\infty a(n)\, x^n = r\, \sum_{n=0}^\infty b(n)\, y^n
\end{equation}
holds in a neighborhood of $p=0$.
Let $\lambda$ be an arbitrary complex number. Then
\begin{equation}
\label{h3}
\sum_{n=0}^\infty a(n)\, (n+\lambda)\, x^n
= \sum_{n=0}^\infty b(n)\,\biggl(n\,\frac{xr}{y}\,\frac{\d y}{\d x} + \biggl(x\,\frac{\d r}{\d x}+\lambda r\biggr)\biggr)y^n.
\end{equation}
\end{Lemma}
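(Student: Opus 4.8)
The plan is to apply the Euler differential operator $\theta=x\,\frac{\d}{\d x}$, augmented by the scalar $\lambda$, to both sides of the hypothesis \eqref{h1}, and then read off the coefficients. Since $x$ is analytic with $x(0)=0$ and, as in all the intended applications, $x'(0)\neq0$, the map $p\mapsto x$ is a local biholomorphism near $p=0$; hence $y$ and $r$ may be regarded as analytic functions of $x$, and the operator $\frac{\d}{\d x}=(\d x/\d p)^{-1}\,\frac{\d}{\d p}$ is well defined on a neighborhood of the origin. All the power series in question converge locally uniformly there, so they may be differentiated term by term, and the manipulations below are legitimate identities of convergent power series.

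First I would treat the left-hand side. Because $\theta\,x^n=n\,x^n$, applying $\theta+\lambda$ to $\sum_{n}a(n)x^n$ gives $\sum_{n}a(n)(n+\lambda)x^n$, which is exactly the left-hand side of the desired identity \eqref{h3}. Next I would treat the right-hand side. Writing $g=\sum_{n}b(n)y^n$, the product rule gives $\theta(rg)=(\theta r)\,g+r\,(\theta g)$, where $\theta r=x\,\frac{\d r}{\d x}$, while the chain rule yields
\[
\theta g = x\,\frac{\d g}{\d x}=x\sum_{n}b(n)\,n\,y^{n-1}\,\frac{\d y}{\d x}=\sum_{n}b(n)\,n\,\frac{x}{y}\,\frac{\d y}{\d x}\,y^n,
\]
so that $r\,\theta g=\sum_{n}b(n)\,n\,\frac{xr}{y}\,\frac{\d y}{\d x}\,y^n$. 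Adding $(\theta r)g=\sum_{n}b(n)\,x\frac{\d r}{\d x}\,y^n$ together with the scalar contribution $\lambda r g=\sum_{n}b(n)\,\lambda r\,y^n$, and collecting the coefficient of each $y^n$, produces precisely the right-hand side of \eqref{h3}. Equating the two computations completes the argument.

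The one point requiring genuine care is the legitimacy of the factor $\frac{x}{y}\,\frac{\d y}{\d x}$, that is, the absence of a real singularity at $p=0$ arising from division by $y$. Since $x$ and $y$ both vanish at $p=0$ and $x$ is a local coordinate there, the quotient $x/y$ and the derivative $\frac{\d y}{\d x}$ are each analytic at the origin whenever $y$ vanishes to first order in $x$, which holds in every instance arising from Theorem~\ref{t1}; thus the series on the right of \eqref{h3} has analytic coefficients. I expect this verification, rather than the differentiation itself, to be the only subtle step.
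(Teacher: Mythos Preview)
Your argument is correct and follows essentially the same route as the paper: apply the operator $x\,\dfrac{\d}{\d x}$ to both sides of \eqref{h1} and then add $\lambda$ times \eqref{h1}, which is exactly your $(\theta+\lambda)$ in one stroke. Your additional remarks on the analyticity of $\dfrac{x}{y}\,\dfrac{\d y}{\d x}$ at $p=0$ go slightly beyond what the paper records, but they do not change the underlying method.
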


\begin{proof}
Applying the differential operator $x\,\frac{\d}{\d x}$ to \eqref{h1} gives
\begin{equation}
\label{h2}
\sum_{n=0}^\infty a(n)\, n\, x^n = \frac{xr}{y}\,\frac{\d y}{\d x}\,\sum_{n=0}^\infty b(n)\,n\,y^n
+x\,\frac{\d r}{\d x}\,\sum_{n=0}^\infty b(n)\,y^n.
\end{equation}
Taking a linear combination of \eqref{h1} and \eqref{h2} gives the required result.
\end{proof}

We are now ready for:

\begin{proof}[Proof of Theorem \textup{\ref{aycock1}}]
By \eqref{12} and \eqref{23} in Theorem~\ref{t1}, the functions
\begin{align*}
x &= \frac {p^2 ( 1-p )^6 ( 1-4p ) ^{
6} ( 1-2p )^2 ( 1+2p )^6}
{ (1 -2p + 4p^2)^3
(1 -6p +240p^2-920p^3+960p^4-96p^5 +64p^6)^3}\\
y &={{\frac {p^3 ( 1-p )  ( 1-4p )^2
 ( 1-2p )^3 ( 1+2p ) }
 { (1 -4p +32p^3-32p^4 )^4}}
}
\intertext{and}
r &=  \frac
{ (1 -2p + 4p^2) ^{1/2}
(1 -6p +240p^2-920p^3+960p^4-96p^5 +64p^6) ^{1/2}}
{(1 -4p +32p^3-32p^4 )}
\end{align*}
satisfy the hypotheses of Lemma~\ref{translate},
where the coefficients are given by
$$
a(n) = \binom{6n}{3n}\binom{3n}{n}\binom{2n}{n}
\quad\text{and}\quad
b(n) = \binom{4n}{2n}{\binom{2n}n}^2.
$$
Setting
$$
p=\frac14(1+3\sqrt{2}-3\sqrt{3})
$$
gives
$$
x=\biggl(\frac{1}{20}\biggr)^3,\qquad y=\biggl(\frac{1}{28}\biggr)^4 \quad\text{and}\quad r=\frac{9}{28}\sqrt{10}.
$$
The derivatives can be calculated by the chain rule and we find that
\begin{align*}
\frac{\d y}{\d x}\biggr|_{p=\frac14\left(1+3\sqrt{2}-3\sqrt{3}\right)}
&=\frac{5^5 }{7^6} \times \frac{\sqrt{6}}{3}
\\ \intertext{and}
\frac{\d r}{\d x}\bigg|_{p=\frac14\left(1+3\sqrt{2}-3\sqrt{3}\right)}
&=\frac{2^2 \times 3^2 \times 5^3}{7^3} \times \sqrt{5}\,(20\sqrt{3}-21\sqrt{2}).
\end{align*}
Substituting these values in \eqref{h3} and taking $\lambda=3/28$ gives
\begin{multline*}
\sum_{n=0}^\infty \binom{6n}{3n}\binom{3n}{n}\binom{2n}{n}\biggl(n+\frac{3}{28}\biggr) \biggl(\frac{1}{20}\biggr)^{3n}
\\
= \frac{150}{343} \times \sqrt{15}\,
\sum_{n=0}^\infty \binom{4n}{2n}\binom{2n}{n}^2\biggl(n+\frac{3}{40}\biggr) \biggl(\frac{1}{28}\biggr)^{4n}.
\end{multline*}
This shows that the series evaluations \eqref{pi1} and \eqref{pi2} are equivalent.

To show that \eqref{pi2} is equivalent to \eqref{pi3}, use \eqref{23} and \eqref{4m1} as motivation to define
\begin{align*}
x &=\frac {p^3 ( 1-p )  ( 1-4p )^2 ( 1-2p )^3 ( 1+2p ) } { (1 -4p +32p^3-32p^4 )^4}, 
\\
y &=\frac {-p( 1-2p )(1+2p)^3(1-p)^3 }{( 1-4p )^6}
\intertext{and}
r &=  \frac{(1 -4p +32p^3-32p^4 )}{(1 -4p )^3}
\end{align*}
and take
$$
a(n) = \binom{4n}{2n}{\binom{2n}n}^2
\quad\text{and}\quad
b(n) = {\binom{2n}n}^3.
$$
Then compute the required derivatives, let $p$ have the same value as above, and put $\lambda=3/40$
in \eqref{h3}. We omit the details as they are similar to the above.
\end{proof}

In order to explain why the particular transformation formulas \eqref{12}, \eqref{23} and \eqref{4m1}
were used in the proof of Theorem~\ref{aycock1}, we use the classification of series, such as \eqref{pi1}--\eqref{pi3},
by modular forms. In \cite[Theorem~2.1]{cc}, it is shown that series such as \eqref{pi1}--\eqref{pi3}
can be classified according to three parameters:
\begin{enumerate}
\item
the level $\ell$;
\item
the degree $N$; and
\item
the nome $q$.
\end{enumerate}
Tables of series for $1/\pi$ for various parameter values are given in~\cite[Tables 3--6]{cc} and \cite{aldawoud}.
The relevant parameters corresponding to the series \eqref{pi1}, \eqref{pi2} and \eqref{pi3} are given by
$$
(\ell,N,q) = (1,2, e^{-2\pi\sqrt{2}}),\quad (2,9,e^{-3\pi\sqrt{2}})\quad \text{and}\quad
(4,2,-e^{-\pi\sqrt{2}}),
$$
respectively. If the $q$-parameters are denoted by $q_1$, $q_2$ and $q_3$, respectively,
then
$$
q_1=q_0^2,\quad q_2=q_0^3\quad\text{and}\quad q_3=-q_0
$$
where $q_0=e^{-\pi\sqrt{2}}$. The relevant issue is that $q_1$, $q_2$ and $q_3$ are all integral powers
of a common value $q_0$, and for $q_3$ there is also a sign change.
The values of $q_1$ and $q_2$ suggest
that quadratic and cubic transformation formulas be used, respectively, while the value of $q_3$ suggests a change of
sign is involved. The corresponding hypergeometric functions which have these properties,
for the relevant levels, are given by
\eqref{12}, \eqref{23} and \eqref{4m1}.

The entries in Tables 3--6 of \cite{cc} may be further analyzed
by their $q$-values to obtain similar relations. This leads to
the following equivalence classes of Ramanujan-type series for $1/\pi$ in Theorems~\ref{th93}--\ref{th97}.

\begin{Theorem}
\label{th93}
Ramanujan's series \textup{(30)} and \textup{(32)} in~\cite{ramanujan_pi}, namely,
\begin{equation}
\label{irrational}
\sum_{n=0}^\infty {\binom{2n} n}^3 \biggl(n+\frac{31}{270+48\sqrt{5}}\biggr) \frac{(\sqrt{5}-1)^{8n}}{2^{20n}}
= \frac{16}{15+21\sqrt{5}} \times \frac{1}{\pi}
\end{equation}
and
\begin{equation}
\label{notirrational}
\sum_{n=0}^\infty \binom{3n}n{\binom{2n}n}^2 \biggl(n+\frac{4}{33}\biggr) \frac{1}{15^{3n}}
= \frac{5\,\sqrt{3}}{22} \times  \frac{1}{\pi}
\end{equation}
are equivalent in the sense that one may be deduced from the other by using transformation formulas in
Theorem~\textup{\ref{t1}}.
\end{Theorem}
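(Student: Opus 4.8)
The plan is to imitate the proof of Theorem~\ref{aycock1} step for step, using Lemma~\ref{translate} as the bridge between the two evaluations. The summand $\binom{2n}n^3$ marks \eqref{irrational} as a level~$4$ series and the summand $\binom{3n}n\binom{2n}n^2$ marks \eqref{notirrational} as a level~$3$ series, so the link between them will be one level-$3$ formula among \eqref{31}--\eqref{3m} paired with one level-$4$ formula among \eqref{41}--\eqref{4m6} of Theorem~\ref{t1}. To pick the right pair I would first compute the triples $(\ell,N,q)$ of both series from the classification in~\cite{cc}, write the two nomes as $\pm q_0^{\,m}$ for a common base nome $q_0$ (the factor $\sqrt5$ in \eqref{irrational} signalling that $q_0$ is built from $e^{-\pi\sqrt5}$), and read off the degrees $m$ and the signs exactly as was done for \eqref{pi1}--\eqref{pi3}; these data single out the two subscripted formulas to be used.

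Suppose the chosen pair asserts that the level-$3$ expression with argument $x_3(p)$ and prefactor $\rho_3(p)$ equals the level-$4$ expression with argument $x_4(p)$ and prefactor $\rho_4(p)$. In the binomial normalization~\eqref{bhf} I would then set
\begin{gather*}
a(n)=\binom{3n}n\binom{2n}n^2,\qquad b(n)=\binom{2n}n^3,\\
x=x_3(p),\qquad y=x_4(p),\qquad r=\rho_3(p)/\rho_4(p),
\end{gather*}
so that $\sum a(n)x^n=r\sum b(n)y^n$ is precisely the chosen identity of Theorem~\ref{t1}. The decisive step is to locate the algebraic value $p=p_0$ at which both arguments reach their targets simultaneously, namely $x_3(p_0)=1/15^3$ and $x_4(p_0)=(\sqrt5-1)^8/2^{20}$. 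That a single $p_0$ can meet both conditions is not an accident but a consequence of the modular origin: both series share the base nome $q_0$, so $p_0=h_0/(1+h_0^2)$ for the value $h_0$ attached to $q_0$. Concretely I would solve $x_3(p)=1/15^3$ for the branch analytic at $p=0$ and verify that $x_4(p_0)=(\sqrt5-1)^8/2^{20}$; I expect $p_0$ to be an algebraic number carrying $\sqrt5$.

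With $p_0$ in hand I would evaluate $\frac{xr}{y}\frac{\d y}{\d x}$ and $x\frac{\d r}{\d x}$ at $p=p_0$ by the chain rule and substitute into~\eqref{h3} with $\lambda=4/33$. Writing the resulting coefficient of $y^n$ as $C\,(n+\lambda')$, I would check that $\lambda'=31/(270+48\sqrt5)$ and that the factor $C$, together with the evaluation $5\sqrt3/(22\pi)$ of \eqref{notirrational}, reproduces the constant $16/(15+21\sqrt5)$ of \eqref{irrational}; this deduces \eqref{irrational} from \eqref{notirrational}, and reversing the roles gives the converse.

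The main obstacle is the location and verification of $p_0$: solving $x_3(p)=1/15^3$ and confirming the second argument condition amounts to handling simultaneous polynomial equations whose solution carries nested radicals in $\sqrt5$, and the overdetermined look of the two conditions makes the modular bookkeeping essential. Once $p_0$ is fixed the derivative evaluations and the reconciliation of the two algebraic prefactors are routine though heavy; the delicate point that must come out exactly right is that the single shift $\lambda=4/33$ is transported by Lemma~\ref{translate} into precisely $\lambda'=31/(270+48\sqrt5)$.
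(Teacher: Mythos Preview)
Your plan is correct and matches the paper's own argument essentially step for step: classify the two series by $(\ell,N,q)$, find a common base nome, select the matching pair of formulas from Theorem~\ref{t1}, and feed them through Lemma~\ref{translate}. The paper fills in the details you leave open: the triples are $(\ell,N,q)=(4,15,e^{-\pi\sqrt{15}})$ and $(3,5,e^{-2\pi\sqrt{5/3}})$, so the common base nome is $q_0=e^{-\pi\sqrt{5/3}}$ (not $e^{-\pi\sqrt5}$ as you guessed), with the level-$4$ nome equal to $q_0^3$ and the level-$3$ nome equal to $q_0^2$; hence the relevant pair is the cubic level-$4$ formula~\eqref{43} together with the quadratic level-$3$ formula~\eqref{32}, and the special value is $p_0=\tfrac12\bigl(8-4\sqrt3+3\sqrt5-2\sqrt{15}\bigr)$, which carries $\sqrt3$ and $\sqrt{15}$ as well as $\sqrt5$.
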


\begin{proof}
The clue is to observe that the series \eqref{irrational} and \eqref{notirrational} correspond to the
data
$$
(\ell,N,q)=(4,15,e^{-\pi\sqrt{15}}) 
\quad\text{and}\quad
(\ell,N,q)=(3,5,e^{-2\pi\sqrt{5/3}}),
$$
respectively, in the classifications in~\cite[Table 3.9]{aldawoud} and~\cite[Table 5]{cc}. The values of $q$ are related by
$$
e^{-\pi\sqrt{15}} = q_0^3
\quad\text{and}\quad
e^{-2\pi\sqrt{5/3}}=q_0^2,
\qquad\text{where}\quad
q_0=e^{-\pi\sqrt{5/3}}.
$$
Therefore, we seek a cubic transformation formula from the level~$4$ theory and a quadratic transformation
formula form the cubic theory. The relevant functions occur in~\eqref{43} and~\eqref{32}, respectively.
The proof may be completed by applying the result of Lemma~\ref{translate} and copying the
procedure in the proof of Theorem~\ref{aycock1}.
We omit the details, as they are similar, except to say that the value of $p$ in this case is given by
\begin{equation*}
p=\frac12\bigl(8-4\sqrt{3}+3\sqrt{5}-2\sqrt{15}\bigr).
\qedhere
\end{equation*}
\end{proof}

The series~\eqref{irrational} is notable for being the only one of Ramanujan's 17 examples
to contain an irrational value for the power series variable; the other 16 series all involve rational numbers.

In the remaining examples, we will be more brief.

\begin{Theorem}
The Ramanujan-type formulas for $1/\pi$ given by the following data in Tables~\textup{3--6} of \cite{cc} are equivalent:
$$
(\ell,N,q) =
(1,3, e^{-2\pi\sqrt{3}}),\;
(1,27,-e^{-3\pi\sqrt{3}}),\;
(3,9,-e^{-\pi\sqrt{3}}),\;
(4,3,e^{-\pi\sqrt{3}})
$$
and $(3,4,e^{-4\pi/\sqrt{3}})$.
\end{Theorem}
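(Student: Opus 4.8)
The plan is to follow the nome-classification method already used for Theorems~\ref{aycock1} and~\ref{th93}: read off the $q$-value of each of the five series, express these as signed integer powers of one base $q_0$, let the level of each series pick out the relevant family in Theorem~\ref{t1} and the exponent pick out the degree of the transformation, and finally convert the resulting ${}_3F_2$-transformation into a relation between the $1/\pi$ series by means of Lemma~\ref{translate}. Writing $q_0=e^{-\pi/\sqrt3}$, the five data points $(1,3)$, $(1,27)$, $(3,9)$, $(4,3)$ and $(3,4)$ carry the nomes $q_0^6$, $-q_0^9$, $-q_0^3$, $q_0^3$ and $q_0^4$, so the exponents are $6,9,3,3,4$ with signs $+,-,-,+,+$.

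The feature that distinguishes this theorem from Theorems~\ref{aycock1} and~\ref{th93} is that \emph{no single} specialization $p=p_0$ can realize all five nomes at once. Matching the level-$3$ series $(3,9)$, whose only negative-degree companion in \eqref{31}--\eqref{3m} is the degree-$(-1)$ function \eqref{3m}, forces the base to be $e^{-\pi\sqrt3}=q_0^3$; but at that base the level-$3$ series $(3,4)$ would require a transformation of degree $4/3$, which is absent. Conversely, taking the finer base $q_0$ needed to reach $(3,4)$ as the degree-$4$ function \eqref{34} would demand a level-$1$ transformation of degree $9$ for $(1,27)$ and a level-$3$ transformation of degree $-3$ for $(3,9)$, and neither exists among the functions of Theorem~\ref{t1}. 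I would therefore prove the theorem by splitting the five series into two overlapping clusters, joined at the two series they share.

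For the first cluster I would pick $p=p_A$ so that the level-$4$ function \eqref{41} realizes the nome $e^{-\pi\sqrt3}$; relative to this base the functions \eqref{41}, \eqref{12}, \eqref{1m3} and \eqref{3m} are of degrees $1,2,-3,-1$ and hence carry the nomes $e^{-\pi\sqrt3}$, $e^{-2\pi\sqrt3}$, $-e^{-3\pi\sqrt3}$ and $-e^{-\pi\sqrt3}$. Being four of the equal functions of Theorem~\ref{t1}, each pair built from \eqref{41} gives, after an application of Lemma~\ref{translate} exactly as in the proof of Theorem~\ref{aycock1}, the equivalence of $(4,3)$ with $(1,3)$, $(1,27)$ and $(3,9)$ respectively. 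For the second cluster I would pick the finer value $p=p_B$ for which \eqref{43} realizes $e^{-\pi\sqrt3}$, so that the base becomes $q_0=e^{-\pi/\sqrt3}$; now \eqref{16}, \eqref{43} and \eqref{34} have degrees $6,3,4$ and carry the nomes $e^{-2\pi\sqrt3}$, $e^{-\pi\sqrt3}$ and $e^{-4\pi/\sqrt3}$, and Lemma~\ref{translate} gives the equivalence of $(1,3)$, $(4,3)$ and $(3,4)$. Since $(1,3)$ and $(4,3)$ occur in both clusters and are the same numerical identities in each, transitivity collects all five series into one equivalence class.

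The principal obstacle is purely computational. I must exhibit the two algebraic values $p_A$ and $p_B$ by solving the argument-matching equations (the analogues of \eqref{qe}) that send the arguments of the chosen functions to the power-series variables of the tabulated series, then evaluate the two derivative factors $x\,\d r/\d x$ and $(xr/y)\,\d y/\d x$ occurring in \eqref{h3} at these points and verify that the translation parameters $\lambda$ reproduce the shifts recorded for these entries in Tables~3--6 of \cite{cc}. The only conceptual point needing care is at the overlap: one must check that feeding $p_A$ into \eqref{41} and feeding $p_B$ into \eqref{43} produce the identical series $(4,3)$ -- and likewise that \eqref{12} at $p_A$ and \eqref{16} at $p_B$ give the identical series $(1,3)$ -- which holds because in each case the two specializations send the respective arguments to the same value of the series variable.
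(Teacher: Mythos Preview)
Your approach is essentially identical to the paper's: the paper also splits into two clusters, using \eqref{12}, \eqref{1m3}, \eqref{3m}, \eqref{41} at one value of $p$ (given as the smallest positive root of a cubic in $2p+1/(2p)$, approximately $0.00431456$) for the first four series, and \eqref{34}, \eqref{43} at $p=1-\sqrt3/2$ to tie $(3,4)$ to $(4,3)$. Your inclusion of \eqref{16} in the second cluster to create a second overlap at $(1,3)$ is redundant but harmless, and your explanation of \emph{why} two values of $p$ are required is a nice addition that the paper omits.
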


\begin{proof}
For the first four sets of parameter values, use
\eqref{12}, \eqref{1m3}, \eqref{3m} and \eqref{41}, and let $p$ be the smallest positive root of
$$
\biggl(2p+\frac{1}{2p}\biggr)^3-120\biggl(2p+\frac{1}{2p}\biggr)^2+480\biggl(2p+\frac{1}{2p}\biggr)-496=0
$$
so that $p\approx 0.00431456$.

The series corresponding to the last two sets of parameter values
$(3,4,e^{-4\pi/\sqrt{3}})$ and $(4,3,e^{-\pi\sqrt{3}})$ can be
shown to be equivalent using~\eqref{34} and~\eqref{43} and using the value
$p=1-\sqrt{3}/2$.
\end{proof}

\begin{Theorem}
The Ramanujan-type formulas for $1/\pi$ given by the following data in Tables~\textup{3--6} of \cite{cc} are equivalent:
$$
(\ell,N,q) =
(1,4, e^{-4\pi}),\;
(2,2,e^{-2\pi}),\;
(2,9,-e^{-3\pi}),\;
(4,4,-e^{-2\pi}).
$$
\end{Theorem}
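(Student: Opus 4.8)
The plan is to follow verbatim the scheme used for Theorems~\ref{aycock1} and~\ref{th93}: read off a common nome, select the matching transformation formulas from Theorem~\ref{t1}, and push the series through Lemma~\ref{translate}. First I would factor the four nomes through a single base. Writing $q_0=e^{-\pi}$, equivalently $\tau_0=i/2$, the data $(1,4,e^{-4\pi})$, $(2,2,e^{-2\pi})$, $(2,9,-e^{-3\pi})$ and $(4,4,-e^{-2\pi})$ become $q=q_0^4,\;q_0^2,\;-q_0^3,\;-q_0^2$ respectively. As in the earlier proofs, the exponent of $q_0$ is exactly the leading power of $p$ in the relevant $F_\ell$-argument, and the sign records whether $-q$ is involved; hence the four series are governed by \eqref{14} (level~$1$, leading $p^4$), \eqref{22} (level~$2$, leading $p^2$), \eqref{2m3} (level~$2$, leading $-p^3$) and \eqref{4m2} (level~$4$, leading $-p^2$). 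Because all forty-two expressions in Theorem~\ref{t1} coincide for each fixed $p$, these four particular expressions are equal at the common value $p=p_*$ attached to $\tau_0=i/2$.

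Next I would realise the equivalences along a spanning chain, say $(2,2)\leftrightarrow(1,4)$, $(2,2)\leftrightarrow(2,9)$ and $(2,2)\leftrightarrow(4,4)$, i.e.\ three applications of Lemma~\ref{translate} centred on \eqref{22}. For each edge I take $x$ and $y$ to be the two $F$-arguments and let $r$ be the ratio of the accompanying algebraic prefactors that turns the equality of the two Theorem~\ref{t1} expressions into the shape \eqref{h1}; then $x(0)=y(0)=0$ and $r(0)=1$ automatically, with $a(n),b(n)$ the binomial coefficients from \eqref{bhf}. Applying \eqref{h3} with $\lambda$ equal to the shift of the source series then produces the target $1/\pi$ sum, once the constants $\frac{xr}{y}\frac{\d y}{\d x}\big|_{p_*}$ and $\bigl(x\frac{\d r}{\d x}+\lambda r\bigr)\big|_{p_*}$ are evaluated by the chain rule exactly as in the proof of Theorem~\ref{aycock1}. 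Note that the edge $(2,2)\leftrightarrow(2,9)$ joins two level-$2$ functions, so there $a(n)=b(n)$; the lemma applies unchanged.

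The crux is the explicit evaluation at $\tau_0=i/2$. The value $p_*\ (\approx 0.04128)$ is the algebraic value of $p=h/(1+h^2)$ at $\tau=i/2$; in practice I would obtain it by equating one tabulated argument from~\cite{cc} — for instance the point of the $(2,2,e^{-2\pi})$ series — to the corresponding rational function of $p$ coming from \eqref{22}, and selecting the root analytic with $p_*\to0$ as $q_0\to0$. I expect this algebraic determination of $p_*$, together with the ensuing derivative evaluations, to be the main obstacle: everything after it is the bookkeeping already rehearsed in Theorem~\ref{aycock1}. Finally I would confirm the two checks that make the chain close up — that each ratio $\bigl(x\frac{\d r}{\d x}+\lambda r\bigr)\big/\bigl(\frac{xr}{y}\frac{\d y}{\d x}\bigr)$ at $p_*$ reproduces the shift of the target series, and that the multiplier $\frac{xr}{y}\frac{\d y}{\d x}\big|_{p_*}$ matches the ratio of the two algebraic right-hand constants. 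Both are forced by the fact that the endpoints are genuine convergent series for $1/\pi$, so these checks only confirm the computation rather than add content.
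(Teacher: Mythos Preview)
Your proposal is correct and matches the paper's own proof: the paper also uses \eqref{14}, \eqref{22}, \eqref{2m3} and \eqref{4m2}, and the value $p_*\approx 0.04128$ you describe is precisely the paper's $p=\tfrac14\bigl(7+3\sqrt{3}-\sqrt{72+42\sqrt{3}}\bigr)$. The only thing the paper adds beyond your outline is this closed-form expression for $p_*$.
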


\begin{proof}
Use \eqref{14}, \eqref{22}, \eqref{2m3} and \eqref{4m2}, and take
\begin{equation*}
p=\frac14\bigl(7+3\sqrt{3}-{\textstyle\sqrt{72+42\sqrt{3}}}\bigr) \approx 0.0412759.
\qedhere
\end{equation*}
\end{proof}

\begin{Theorem}
The Ramanujan-type formulas for $1/\pi$ given by the following data in Tables~\textup{3--6} of \cite{cc} are equivalent:
$$
(\ell,N,q) =
(1,7, e^{-2\pi\sqrt{7}}),\;
(1,7,-e^{-\pi\sqrt{7}}),\;
(2,7,-e^{-\pi\sqrt{7}}),\;
(4,7,e^{-\pi\sqrt{7}}).
$$
\end{Theorem}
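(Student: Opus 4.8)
The plan is to follow the strategy of the proofs of Theorems~\ref{aycock1} and~\ref{th93}, exhibiting the four series as specializations of Theorem~\ref{t1} at a single value of~$p$. The first step is to spot a common nome. Writing the four nomes as $q_1=e^{-2\pi\sqrt7}$, $q_2=q_3=-e^{-\pi\sqrt7}$ and $q_4=e^{-\pi\sqrt7}$, I set $q_0=e^{-\pi\sqrt7}$ and observe
\[
q_1=q_0^2,\qquad q_2=q_3=-q_0,\qquad q_4=q_0.
\]
The squaring $q_1=q_0^2$ signals a quadratic transformation in the level-$1$ theory, so I would use~\eqref{12}; the sign reversals $q_2=-q_0$ and $q_3=-q_0$ call for the degree-one sign-change transformations~\eqref{1m1} at level~$1$ and~\eqref{2m1} at level~$2$; and $q_4=q_0$ matches the direct level-$4$ transformation~\eqref{41}. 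Thus the four prescribed series are the specializations at a single~$p$ of the hypergeometric expressions in~\eqref{12}, \eqref{1m1}, \eqref{2m1} and~\eqref{41}.

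The next step, which I expect to be the main obstacle, is to determine that common value of~$p$. Since $q_0=e^{-\pi\sqrt7}$ is a singular value and $\mathbb{Q}(\sqrt{-7})$ has class number one, the corresponding value of~$h$, and hence $p=h/(1+h^2)$, is algebraic and expressible in radicals; I would pin it down either as an explicit radical expression involving~$\sqrt7$ or as the appropriate root of a low-degree polynomial whose coefficients are extracted from the level-$12$ modular equations of~\cite{cooperye}. The resulting value must be checked to reproduce all four nomes simultaneously and to place each hypergeometric argument inside its disk of convergence.

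With~$p$ fixed, the rest copies the mechanism of Theorem~\ref{aycock1}. Working along the chain \eqref{12}, \eqref{1m1}, \eqref{2m1}, \eqref{41}, I would for each consecutive pair read off the functions $x$, $y$ and $r$ from the two formulas, confirm the normalizations $x(0)=y(0)=0$ and $r(0)=1$ required by Lemma~\ref{translate}, compute $\d y/\d x$ and $\d r/\d x$ at the chosen~$p$ by the chain rule, and substitute into~\eqref{h3} with~$\lambda$ set to the rational shift of the relevant series. Each substitution carries the weight $(n+\lambda)$ of one series onto its neighbour, and concatenating the three equivalences shows that all four Ramanujan-type formulas for~$1/\pi$ are equivalent. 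As in the earlier proofs, the differential bookkeeping is routine once~$p$ is known, so the details may be omitted.
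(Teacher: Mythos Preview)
Your proposal is correct and follows essentially the same approach as the paper: you identify the same common nome $q_0=e^{-\pi\sqrt7}$, select precisely the same four transformation formulas \eqref{12}, \eqref{1m1}, \eqref{2m1} and \eqref{41}, and outline the Lemma~\ref{translate} mechanism exactly as in Theorems~\ref{aycock1} and~\ref{th93}. The only addition in the paper's proof is the explicit determination of~$p$ as the smallest positive root of a quartic in $2p+\frac{1}{2p}$ (giving $p\approx 0.000245523$), which is the concrete realization of the step you flagged as the main obstacle.
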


\begin{proof}
Use \eqref{12}, \eqref{1m1}, \eqref{2m1} and \eqref{41}, and let $p$ be the smallest positive root of
\begin{multline*}
\biggl(2p+\frac{1}{2p}\biggr)^4 - 2044\biggl(2p+\frac{1}{2p}\biggr)^3
+15360\biggl(2p+\frac{1}{2p}\biggr)^2
\\
-38416\biggl(2p+\frac{1}{2p}\biggr)+31984=0
\end{multline*}
so that $p \approx 0.000245523$.
\end{proof}

\begin{Theorem}
\label{th97}
The Ramanujan-type formulas for $1/\pi$ given by the following data in Tables~\textup{3--6} of \cite{cc} are equivalent:
$$
(\ell,N,q) =
(2,3, e^{-2\pi\sqrt{3/2}}),\;
(3,2,e^{-2\pi\sqrt{2/3}}).
$$
\end{Theorem}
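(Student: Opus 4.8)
The plan is to mirror the proof of Theorem~\ref{aycock1}: extract a common base nome $q_0$, select the two matching transformations from Theorem~\ref{t1}, and then transfer one Ramanujan-type evaluation onto the other via Lemma~\ref{translate}. First I would express both nomes through a single $q_0$. Writing $\sqrt{3/2}=\tfrac12\sqrt6$ and $\sqrt{2/3}=\tfrac13\sqrt6$, the level~$2$ nome equals $e^{-2\pi\sqrt{3/2}}=e^{-\pi\sqrt6}=q_0^3$ and the level~$3$ nome equals $e^{-2\pi\sqrt{2/3}}=e^{-\frac23\pi\sqrt6}=q_0^2$, where $q_0=e^{-\pi\sqrt6/3}$. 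As in Theorem~\ref{aycock1}, the exponent of $q_0$ prescribes the degree of the transformation to use: the level~$2$ series sitting at $q_0^3$ calls for the degree-three formula~\eqref{23}, while the level~$3$ series at $q_0^2$ calls for the degree-two formula~\eqref{32}.

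Next I would invoke the equality of \eqref{32} and \eqref{23} inside Theorem~\ref{t1}. In the binomial normalization~\eqref{bhf} this is an instance of~\eqref{h1}, namely
\[
\sum_{n=0}^\infty\binom{3n}{n}\binom{2n}{n}^2 x^n
= r\sum_{n=0}^\infty\binom{4n}{2n}\binom{2n}{n}^2 y^n,
\]
with
\[
x=\frac{p^2(1-p)^2(1-4p)^2(1-4p^2)^2}{(1-2p+4p^2)^6},\qquad
y=\frac{p^3(1-p)(1-4p)^2(1-2p)^3(1+2p)}{(1-4p+32p^3-32p^4)^4},
\]
and $r=(1-2p+4p^2)^2/(1-4p+32p^3-32p^4)$. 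Since $x(0)=y(0)=0$ and $r(0)=1$, Lemma~\ref{translate} applies with $a(n)=\binom{3n}{n}\binom{2n}{n}^2$ and $b(n)=\binom{4n}{2n}\binom{2n}{n}^2$; formula~\eqref{h3} then turns $\sum a(n)(n+\lambda)x^n$ into a linear combination producing the level~$2$ series in the variable~$y$.

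Finally I would pin down the singular value of $p$ attached to $q_0$. In practice one sets $x(p)$ equal to the rational series-argument tabulated for the $(3,2,e^{-2\pi\sqrt{2/3}})$ entry of \cite[Table~5]{cc}, solves the resulting algebraic equation, and keeps the small real root; since the base point carries the small invariant $N_0/\ell_0=\tfrac16$, I expect $p$ to be a low-degree algebraic number, plausibly lying in $\mathbb Q(\sqrt2,\sqrt3)$. One then checks that this same $p$ sends $y(p)$ to the argument of the $(2,3,e^{-2\pi\sqrt{3/2}})$ series. With $p$ in hand, $\d y/\d x$ and $\d r/\d x$ are computed at that point by the chain rule and substituted into~\eqref{h3} with $\lambda$ taken to be the linear offset of the level-three series, after which the constant relating the two $1/\pi$ identities is read off and the level-two offset is confirmed to match the tabulated one. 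The main obstacle is exactly this last step: isolating the precise algebraic value of $p$ and pushing the surd arithmetic through the two derivative evaluations so that the constants align. Everything else is structural and simply copies the template of Theorem~\ref{aycock1}; once $p$ is identified the verification is a finite, if tedious, computation.
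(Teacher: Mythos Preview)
Your approach is exactly the one the paper takes: it uses the pair \eqref{23} and \eqref{32} from Theorem~\ref{t1} and feeds them through Lemma~\ref{translate}, just as in the template of Theorem~\ref{aycock1}. The only datum you have not supplied is the explicit singular value of~$p$; the paper records it as $p=\tfrac14(1+\sqrt{3}-\sqrt{6})$, confirming your expectation that $p\in\mathbb Q(\sqrt2,\sqrt3)$.
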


\begin{proof}
Use \eqref{23} and \eqref{32} and take
\begin{equation*}
p=\frac14(1+\sqrt{3}-\sqrt{6}).
\qedhere
\end{equation*}
\end{proof}

Finally, we notice that the identities in Theorems \ref{t1}, \ref{t2} and \ref{t3} can be used in designing AGM-type
algorithms \cite{agm} for the effective computation of~$\pi$ and other mathematical constants. The details of such applications can be found
in \cite{CGSZ,guillera}.

\medskip
\noindent
\textbf{Acknowledgments.}
A part of the work was done during the second author's stay at the Max-Planck-Institut f\"ur Mathematik, Bonn,
in May--June 2016. He thanks the staff of the institute for the excellent conditions he experienced when conducting this research.

\end{document}